\documentclass[a4paper, 11pt, draft]{amsart}
\usepackage{amsmath, amsthm, amscd, amssymb, amsfonts, amsxtra, amssymb, latexsym, bm}
\usepackage{enumerate}
\usepackage{verbatim}
\usepackage{float}
\usepackage{inputenc, t1enc}

\hoffset -1,2cm \voffset -.75cm 
\textwidth 15.5cm \textheight 23.25cm
\pagestyle{headings}
\setlength{\parskip}{0.25cm}
\setlength{\parsep}{0.1cm}
\setlength{\topsep}{0.1cm}
\setlength{\parindent}{0.5cm}
\setlength{\itemindent}{.125cm}
\setlength{\itemsep}{.225cm}
%\renewcommand{\baselinestretch}{1} 
%commands
\newcommand{\Z}{\mathbb{Z}}
\newcommand{\N}{\mathbb{N}}
\newcommand{\ff}{\mathbb{F}}
\newcommand{\Tr}{\operatorname{Tr}}

\newcommand{\CC}{\mathcal{C}}

\newcommand{\G}{\Gamma}

%theoremstyle
\newtheorem{thm}{Theorem}[section]
\newtheorem{prop}[thm]{Proposition}

\newtheorem{coro}[thm]{Corollary}

\theoremstyle{definition}
\newtheorem{rem}[thm]{Remark}
\newtheorem{exam}[thm]{Example}
\newtheorem{defi}[thm]{Definition}

\theoremstyle{remark}

\setcounter{page}{1}

\usepackage{color}

\begin{document}
\numberwithin{equation}{section}
\title[Spectra of GP-graphs and irreducible cyclic codes ]{Spectral properties of generalized Paley graphs \\ 
and their associated irreducible cyclic codes}
\author{Ricardo A.\@ Podest\'a, Denis E.\@ Videla}
\dedicatory{\today}
\keywords{Irreducible cyclic codes, generalized Paley graphs, spectra, cartesian decomposable}
\thanks{2010 {\it Mathematics Subject Classification.} 
Primary 94B15, 05C25;\, Secondary 05C50, 11P05}
\thanks{Partially supported by CONICET, FonCyT and SECyT-UNC}
\address{Ricardo A.\@ Podest\'a. FaMAF -- CIEM (CONICET), Universidad Nacional de C\'ordoba. 
	\newline Av.\@ Medina Allende 2144, Ciudad Universitaria, (5000), C\'ordoba, Argentina. \newline
{\it E-mail: podesta@famaf.unc.edu.ar}}
\address{Denis E.\@ Videla. FaMAF -- CIEM (CONICET), Universidad Nacional de C\'ordoba. 
\newline	Av.\@ Medina Allende 2144, Ciudad Universitaria, (5000), C\'ordoba, Argentina.
	\newline {\it E-mail: dvidela@famaf.unc.edu.ar}}

\begin{abstract}
For $q=p^m$ with $p$ prime and $k\mid q-1$, we consider the generalized Paley graph $\Gamma(k,q) = Cay(\ff_q, R_k)$, with 
$R_k=\{ x^k : x \in \ff_q^* \}$, and the irreducible $p$-ary cyclic code 
$\CC(k,q) = \{(\Tr_{q/p}(\gamma \omega^{ik})_{i=0}^{n-1})\}_{\gamma \in \ff_q}$, with $\omega$ a primitive element of $\ff_q$ and 
$n=\tfrac{q-1}{k}$.
We first express the spectra of $\Gamma(k,q)$ in terms of Gaussian periods. Then, we show that the spectra of $\Gamma(k,q)$ and $\CC(k,q)$ are mutually determined by each other if further $k\mid \tfrac{q-1}{p-1}$. 
We give $Spec(\Gamma(k,q))$ explicitly for those graphs associated with irreducible 2-weight cyclic codes in the semiprimitive and exceptional cases.
We also compute $Spec(\Gamma(3,q))$ and $Spec(\Gamma(4,q))$.
\end{abstract}

\maketitle

\section{Introduction}
The connection between cyclic codes and graphs was first noticed almost 50 years ago. 
Several different relations can be found in the literature. 
In the first place, some authors constructed 2-weight irreducible cyclic codes from strongly regular graphs (srg) and conversely. 
In fact, Delsarte (1972, \cite{D}) constructed strongly regular graphs from 2-weight irreducible cyclic codes and established a direct relationship between the distance matrix of the code and the adjacency matrix of the graphs. 
Also, van Lint and Schrijver (1982, \cite{vLSch}) gave another construction of strongly regular graphs from cosets of mulplicative subgroups of $\ff_q$. Moreover, Calderbank and Kantor (1987, \cite{CK}) showed that any projective 2-weight irreducible cyclic code can be obtained from a strongly regular graph. 
Later, Haemers, Peeters and van Rijckevorsel (1999, \cite{HPvR}) constructed binary linear codes from the row-spam of the adjacency matrix of a given regular graph. In particular, they showed that the code corresponding to the Paley graph is the quadratic residue code. 

From the early 2000's on, some authors constructed linear codes with good decoding properties (PD-sets) from the row-spam of the incidence matrix (2003, \cite{GK}) and from adjacency matrix of Paley graphs (2004, \cite{KL}). This result was later extended by considering generalized Paley graphs (2013, \cite{SL}). 
Finally, in 2013, the spectrum of cyclic codes with many arbitrary number of zeros was computed using the spectrum of Hermitian form graphs (\cite{LHFG}, \cite{ZZDX}). By using quadratic forms, Zhou et al.\@ (\cite{ZZDX}) extend the computations to other families of codes. 

In this paper we will establish a spectral relation between generalized Paley graphs $\Gamma(k,q)$ and certain irreducible cyclic codes $\CC(k,q)$ and deduce some structural properties for the graphs from them.

\subsubsection*{Generalized Paley graphs}
If $G$ is a group and $S$ is a subset of $G$ not containing $0$, the associated Cayley graph $\Gamma = X(G,S)$ is the digraph with 
vertex set $G$ and where two vertices $u,v$ form a directed edge from $u$ to $v$ in $\Gamma$ if and only if $v-u \in S$. 
If $S$ is symmetric ($S=-S$), then $X(G,S)$ is a simple (undirected) graph. 

The \textit{generalized Paley graph} is the Cayley graph (\textit{GP-graph} for short)
\begin{equation} \label{Gammas}
\G(k,q) = X(\ff_{q},R_{k}) \quad \text{with } \quad R_{k} = \{ x^{k} : x \in \ff_{q}^*\}.
\end{equation} 
That is, $\G(k,q)$ is the graph whit vertex set $\ff_{q}$ and two vertices $u,v \in \ff_{q}$ are neighbors (directed edge) 
if and only if 
$v-u=x^k$ for some $x\in \ff_q^*$. 
Notice that if $\omega$ is a primitive element of $\ff_{q}$, then $R_{k} = \langle \omega^{k} \rangle = \langle \omega^{(k,q-1)} \rangle$. This implies that 
$$\G(k,q)= \G((k,q-1),q)$$
and that it is a $\frac{q-1}{(k,q-1)}$-regular graph. 
Thus, we will assume that $k \mid q-1$. The graphs $\Gamma(k,q)$ are denoted $GP(q,\frac{q-1}k)$ in \cite{LP}.
The graph $\G(k,q)$ is simple if $q$ is even or if $k \mid \tfrac{q-1}2$ for $p$ odd, and it is 
connected if $\tfrac{q-1}{k}$ is a primitive divisor of $q-1$ (i.e.\@ $\frac{p^m-1}{k}$ does not divide $p^{a}-1$ for any $a<m$).
For $k=1, 2$ we get the complete graph $\G(1,q)=K_q$ and the classic Paley graph $\Gamma(2,q) = P(q)$. 

We will also consider the complementary graph $\bar \G(k,q)=X(\ff_{q^m}, R_k^c \smallsetminus \{0\})$.
Let $\alpha$ be a primitive element of $\ff_{q^m}$ and consider the cosets $R_k^{(j)}=\alpha^j R_k$ for $0 \le j \le n$. 
If we put $\G^{(j)}(k,q) = \G(\ff_{q^m}, R_k^{(j)})$ then we have the disjoint union 
$$\bar \G(k,q) = \G^{(1)}(k,q) \cup \cdots \cup \G^{(n-1)}(k,q)$$
where $\G^{(j)}(k,q) \simeq \G(k,q)$ for every $1\le j \le n-1$ 
(same proof as in Lemma 4.2 in \cite{PV1}).

The spectrum of a graph $\G$, denoted $Spec(\G)$, is the spectrum of its adjacency matrix $A$ (i.e.\@  
the set of eigenvalues of $A$ counted with multiplicities).
If $\Gamma$ has different eigenvalues $\lambda_0, \ldots, \lambda_t$ with multiplicities $m_0,\ldots,m_t$, we write 
as usual 
$$Spec(\Gamma) = \{[\lambda_0]^{m_0}, \ldots, [\lambda_t]^{m_t}\}.$$
It is well-known that an $n$-regular graph $\G$ has $n$ as one of its eigenvalues, with multiplicity equal to the number of connected components of $\G$. 
There are few cases of known spectrum of GP-graphs. For instance unitary Cayley graphs over rings $X(R,R^*)$, where $R$ is a finite abelian ring and $R^*$ is the group of units (see \cite{Ak}, this includes the cases $X(\Z_n,\Z_n^*)$) and $X(\ff_{q^m},S_\ell)$ with $S_\ell=\{ x^{q^\ell+1} : x\in \ff_{q^m}^*\}$ where $\ell \mid m$ (see \cite{PV1}, this includes the classical Paley graphs $P(q)$).
We will compute $Spec(\G(k,q))$, which includes $X(\ff_{q^m},S_\ell)$ since $S_\ell=R_{q^\ell+1}$.

\subsubsection*{Irreducible cyclic codes}
A linear code of length $n$ over $\ff_q$ is a vector subspace $\CC$ of $\ff_q^n$. The weight of a codeword $c=(c_{0},\ldots,c_{n-1})$ is the number $w(c)$ of its nonzero coordinates. 
The spectrum of $\CC$, denoted $Spec(\CC) = (A_0,\ldots,A_{n})$, is the sequence of frequencies 
$A_{i} = \#\{c\in\mathcal{C} : w(c)=i\}$.
A linear code $\CC$ is cyclic if for every $(c_{0},\ldots,c_{n-1})$ in $\CC$ the shifted codeword  
$(c_{1},\ldots,c_{n-1},c_{0})$ is also in $\CC$.  
An important subfamily of cyclic codes is given by the irreducible cyclic codes. 
For $k \mid q-1$ we will be concerned with the weight distribution of the $p$-ary irreducible cyclic codes 
\begin{equation} \label{Ckqs}
\mathcal{C}(k,q) = \big \{ c_{\gamma} = \big( \Tr_{q/p}(\gamma\, \omega^{ki}) \big)_{i=0}^{n-1} : \gamma \in \ff_{q} \big \} 
\end{equation}
where $\omega$ is a primitive element of $\ff_{q}$ over $\ff_p$.
These are the codes with zero $\omega^{-k}$ and length
\begin{equation} \label{N and n} 
n = \tfrac {q-1}N \qquad \text{with} \qquad N = \gcd(\tfrac{q-1}{p-1}, k).
\end{equation}

The computation of the spectrum of (irreducible) cyclic codes is in general a difficult task. 
There are several papers on the computation of the spectra of some of these codes using exponential sums. 
Baumert and McEliece were one of the first authors to compute the spectrum in terms of Gauss sums (\cite{BMc}, \cite{Mc}). 
In 2009, Ding showed (\cite{Di1}, \cite{Di2}) that the weights of irreducible cyclic codes can be calculated in terms of Gaussian periods
\begin{equation} \label{gaussian}
\eta_{i}^{(N,q)} = \sum_{x\in C_{i}^{(N,q)}} \zeta_p^{\Tr_{q/p}(x)} \in \mathbb{C}, \qquad 0 \le i \le N-1,
\end{equation} 
where $\zeta_p = e^{\frac{2\pi i}{p}}$ and 
$C_{i}^{(N,q)} = \omega^{i} \, \langle \omega^N \rangle$
is the coset in $\ff_q$ of the subgroup $\langle \omega^{N} \rangle$ of $\ff_{q}^*$. 
From Theorem 14 in \cite{DY}, we have the following integrality results:
\begin{equation} \label{int gp}
\eta_i^{(N,q)} \in \Z \qquad \text{and} \qquad N \eta_i^{(N,q)} +1 \equiv 0 \pmod p.
\end{equation}

The spectra of irreducible cyclic codes with few weights are known.  
In 2002, Schmidt and White conjectured a characterization of 2-weight irreducible cyclic codes into three cases: 
subfield subcodes, semiprimitive and exceptional. 
In 2007, Wolfmann and Vega characterized all 1-weight irreducible cyclic codes (\cite{VW}) and found a 
characterization of all projective 2-weights codes (not necessarily irreducible). 
We now list several cases where the spectra of irreducible cyclic codes is known. 
The survey of Ding and Yang (\cite{DY}) summarizes all the results on spectra of irreducible cyclic codes until 2013. 
If $q=p^m$, $s=q^t$ and $n$, $N$ as in \eqref{N and n}, then we have:
\begin{itemize}
\item Conditions on $k$ and $n$:
\begin{enumerate}[$(a)$]
	\item the \textit{semiprimitive case}: $k\mid q^j+1$ and $j \mid \frac t2$ (\cite{BMc}, \cite{DG}, \cite{Mc});
	\item $n$ is a prime power (\cite{SB});
	\item $k$ is a prime with $k\equiv 3 \pmod 4$ and $\mathrm{ord}_{q}(k)=\frac{k-1}{2}$ (\cite{BMy}). 	
\end{enumerate} 
\item Small values of $N$: $k\mid s-1$, arbitrary $q$ and 
\begin{enumerate}[$(a)$]
	\item $N=1$ (\cite{Di1}, \cite{DY});
	\item $N=2$ (\cite{BMc}, \cite{DY});
	\item $N=3$, with $p\equiv 1 \pmod 3$ (\cite{Di1}, \cite{Di2}, \cite{DY}) or 
	$p\equiv 2 \pmod 3$ and $mt$ even (\cite{DY});
	\item $N=4$ and $p\equiv 1 \pmod 4$ (\cite{Di1}, \cite{Di2}, \cite{DY}).
\end{enumerate} 
\item Few weights:
\begin{enumerate}[$(a)$]
  \item 1-weight codes (\cite{VW}, \cite{DY});
  \item 2-weight codes (\cite{SW}, \cite{DY});
  \item some 3-weight codes (\cite{Di2}).
\end{enumerate} 
\end{itemize}
\vspace{-.5em}

\subsubsection*{Outline and results}
We now give a brief summary of the results of the paper. 
In Section~2 we compute the spectrum of $\Gamma(k,q)$ and 
$\bar \Gamma(k,q)$ in terms of Gaussian periods
(Theorem~\ref{Spectro Gkq}).
If further $k\mid \frac{q-1}{p-1}$, both such spectra are integral. 
In the next two sections we consider the case of $(k,q)$ being a semiprimitive pair (see Definition 3.1).  
In Theorem \ref{semiprimitive} we deduce explicit expressions for the spectra of the semiprimitive graphs $\G(k,q)$ and 
$\bar \Gamma(k,q)$. 
From this, we obtain that they are strongly regular graphs (srg); so, we give their srg-parameters and their intersection arrays as distance regular graphs, and we also prove that they are Latin square graphs in half of the cases (Proposition \ref{srg}). 
In Section~4 we study the property of being Ramanujan. 
In Proposition \ref{rama car}, we give a partial characterization of all 
semiprimitive pairs $(k,q)$ such that $\G(k,q)$ is Ramanujan, showing that $k$ can only take the values $2, 3, 4$ or $5$.

In Section 5, we show that the spectrum of the graph $\Gamma(k,q)$ is closely related 
with the corresponding one for the code $\CC(k,q)$. 
In Theorem \ref{pesoaut}, we show that the eigenvalue $\lambda_\gamma$ of $\G(k,q)$ and the weight $c_\gamma$ the code $\CC(k,q)$ 
satisfy the simple expression 
$$\lambda_\gamma = \tfrac{q-1}k - \tfrac{p}{p-1} \, w(c_\gamma).$$
As a consequence, in Corollary~\ref{Ram} we get a lower bound for the minimum distance $d$ of $\CC(k,q)$ in the case that $\G(k,q)$ 
is Ramanujan. In the following section, using this result and the known weight distribution of the codes $\CC(3,q)$ and $\CC(4,q)$, we compute the spectra of $\G(3,q)$ and $\G(4,q)$.  
Finally, the exceptional case is treated in the last section, where we compute the spectrum of 
$\G(k,q)$ and $\CC(k,q)$ for the eleven exceptional pairs (Theorem \ref{exceptional}).

\section{The spectrum of generalized Paley graphs and Gaussian periods}
Here, we compute the spectrum of $\Gamma(k,q)$ and of its complement $\bar \G(k,q)=X(\ff_q, R_k^c \smallsetminus \{0\})$, in terms of Gaussian periods.
Let $n=\frac{q-1}k$ and $\eta_0 = \eta_0^{(N,q)}, \ldots, \eta_{k-1} = \eta_{k-1}^{(N,q)}$ 
be the Gaussian periods as in \eqref{gaussian}. 
Also, let $\eta_{i_1},\ldots, \eta_{i_s}$ denote the different Gaussian periods not equal to $n$ and, for $0 \le i \le k-1$, define the following numbers  
\begin{equation} \label{numbers} 
\mu = \#\{0 \le i \le k-1 : \eta_i =n\} \ge 0 \quad \text{and} \quad 
\mu_{i} = \#\{ 0 \le j \le k-1 : \eta_{j} = \eta_{i}\} \ge 1.
\end{equation}

We now show that, under mild conditions, both GP-graphs and their complements are what we call \textit{$GP$-spectral}, that is 
their spectra are determined by Gaussian periods.

\begin{thm} \label{Spectro Gkq}
Let $q=p^m$ with $p$ prime and $k \in \N$ such that $k\mid q-1$ and also $k \mid \frac{q-1}2$ if $p$ is odd.
If we put $n=\frac{q-1}k$ then, in the previous notations, we have 
\begin{equation} \label{spec Gkq} 
Spec(\G(k,q)) = \{ [n]^{1+\mu n}, [\eta_{i_1}]^{\mu_{i_1} n}, \ldots, [\eta_{i_s}]^{\mu_{i_s} n} \}
\end{equation}
and $Spec(\bar \G(k,q)) = \{ [(k-1)n]^{1+\mu n}, [-1-\eta_{i_1}]^{\mu_{i_1} n}, \ldots, [-1-\eta_{i_s}]^{\mu_{i_s} n} \}$,
where $\eta_i^{(k,q)}$ are the Gaussian periods as in \eqref{gaussian}. 
Moreover, we have:
\begin{enumerate}[($a$)]
\item $\G(k,q)$ and $\bar \G(k,q)$ are connected if and only if $\mu=0$. 

\item If $k \mid \frac{q-1}{p-1}$ then $Spec(\G(k,q))$ and $Spec(\bar \G(k,q))$ are integral.
\end{enumerate}
\end{thm}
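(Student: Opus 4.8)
The proof rests on the standard eigenvalue formula for Cayley graphs on abelian groups. Since $\G(k,q) = X(\ff_q, R_k)$ with $R_k = \langle \omega^k\rangle$ a subgroup of $\ff_q^*$, and $\ff_q$ is an elementary abelian $p$-group, its characters are $\psi_a(x) = \zeta_p^{\Tr_{q/p}(ax)}$ for $a \in \ff_q$. The eigenvalue attached to $\psi_a$ is
$$\lambda_a = \sum_{y \in R_k} \psi_a(y) = \sum_{y \in R_k} \zeta_p^{\Tr_{q/p}(ay)}.$$
For $a=0$ we get $\lambda_0 = |R_k| = n$. For $a = \omega^i \neq 0$, substituting $y = \omega^{kj}$ and collecting, $\lambda_{\omega^i} = \sum_{z \in \omega^i R_k} \zeta_p^{\Tr_{q/p}(z)}$, which is exactly the Gaussian period $\eta_i^{(k,q)}$ of the coset $C_i^{(k,q)} = \omega^i\langle \omega^k\rangle$ (note $N$ here should be read as $k$, consistent with the statement's final clause). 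First I would record this identification and note that $\lambda_{\omega^i}$ depends only on $i \bmod k$, since $\omega^k \in R_k$ scales the coset trivially; hence there are at most $k$ distinct ``nonzero-index'' eigenvalues $\eta_0, \dots, \eta_{k-1}$, and the eigenvalue $\eta_i$ is shared by the $n$ characters $\psi_{\omega^{i+k\ell}}$, $0 \le \ell < n$. Grouping the $q-1$ nonzero indices into $k$ classes of size $n$, and then grouping equal values using the multiplicities $\mu_i$ from \eqref{numbers}, gives exactly the multiset on the right of \eqref{spec Gkq}: the value $n$ occurs once from $a=0$ plus $\mu n$ times from the $\mu$ indices with $\eta_i = n$, and each distinct off-value $\eta_{i_r}$ occurs $\mu_{i_r} n$ times. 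The symmetry hypothesis $R_k = -R_k$ (guaranteed by $k \mid \tfrac{q-1}{2}$ for $p$ odd, trivially for $p=2$) is what makes the graph undirected so that $A$ is real-symmetric and this genuinely is the spectrum.

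**The complement.** For $\bar\G(k,q)$ the adjacency matrix is $J - I - A$, where $J$ is all-ones and $A = A(\G(k,q))$. These commute (both are polynomials in the Cayley structure / both are diagonalized by the same character basis), so eigenvalues subtract. On the all-ones eigenvector (the $a=0$ character) we get $q - 1 - n = (k-1)n$; on every other character $\psi_a$, $J$ acts as $0$, so the eigenvalue is $-1 - \lambda_a = -1 - \eta_i$. This immediately yields the claimed $Spec(\bar\G(k,q))$ with the same multiplicities.

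**Parts (a) and (b).** For (a): a $d$-regular graph is connected iff $d$ has multiplicity one in the spectrum. Here the multiplicity of $n$ is $1 + \mu n$, which equals $1$ iff $\mu = 0$; and for the complement the multiplicity of $(k-1)n$ is likewise $1 + \mu n$ — but one must separately check that $(k-1)n$ does not also coincide with some $-1-\eta_i$, i.e. that no off-eigenvalue of the complement equals its degree; this follows since $-1 - \eta_i = (k-1)n$ would force $\eta_i = -1 - (k-1)n = -1 - (q-1-n) < -n$, impossible as $|\eta_i| \le n$ (a Gaussian period over a coset of size $n$ is a sum of $n$ roots of unity... more carefully, $\eta_i = n$ forces the coset to lie in $\ker\Tr$, and in all other cases $|\eta_i| < n$, certainly $\ge -n$). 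So $\mu = 0$ characterizes connectivity of both. For (b): by \eqref{int gp} each $\eta_i^{(k,q)} \in \Z$ whenever the relevant subgroup is $\langle \omega^k\rangle$ — but \eqref{int gp} is stated for $\eta_i^{(N,q)}$ with $N = \gcd(\tfrac{q-1}{p-1}, k)$, so the point is to argue that under the extra hypothesis $k \mid \tfrac{q-1}{p-1}$ one has $N = k$, making $C_i^{(k,q)}$ literally the cosets appearing in \eqref{gaussian}–\eqref{int gp}, hence $\eta_i \in \Z$, hence the whole spectrum (and that of the complement, via $-1 - \eta_i$) is integral.

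**Main obstacle.** The genuinely delicate point is the bookkeeping in the passage from ``$k$ cosets each of size $n$'' to the stated grouped multiset, i.e. making sure the $\mu$ and $\mu_i$ in \eqref{numbers} are deployed consistently (in particular that $\sum_r \mu_{i_r} + \mu = k$, so the multiplicities sum to $1 + \mu n + \sum_r \mu_{i_r} n = 1 + kn = q$, matching $|\ff_q|$). Everything else — the character computation, $J-I-A$ for the complement, the connectivity criterion — is routine; the only external input needed is \eqref{int gp} for part (b), together with the elementary observation that $k \mid \tfrac{q-1}{p-1}$ forces $N = k$.
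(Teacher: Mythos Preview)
Your proposal is correct and follows essentially the same route as the paper: the character formula $\lambda_\gamma=\sum_{y\in R_k}\zeta_p^{\Tr(\gamma y)}$ for abelian Cayley graphs, the identification $\lambda_\gamma=\eta_i^{(k,q)}$ for $\gamma\in C_i^{(k,q)}$, the grouping of multiplicities via $\mu,\mu_i$, the passage to $J-I-A$ for the complement, the multiplicity-one criterion for connectivity in (a), and the observation that $k\mid\frac{q-1}{p-1}$ forces $N=k$ so that \eqref{int gp} applies in (b). If anything you are slightly more careful than the paper in (a), where you verify that $(k-1)n$ cannot coincide with any $-1-\eta_i$ for the complement; the paper's proof only treats $\G(k,q)$ explicitly there.
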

 
\begin{proof}
We first compute the eigenvalues of $\G(k,q)$. It is well-known that the spectrum of a Cayley graph $X(G,S)$ is determined by the irreducible characters of $G$. If $G$ is abelian, each irreducible character $\chi$ of $G$ induces an eigenvalue of 
$X(G,S)$ by the expression 
\begin{equation}\label{Eigencayley}
\chi(S) = \sum_{g \in S} \chi(g)
\end{equation} 
with eigenvector $v_{\chi} = \big( \chi(g) \big)_{g\in G}$.

For $\Gamma(k,q)$ we have $G=\ff_q$ and $S=R_k$. 
The irreducible characters of $\ff_{q}$ are $\{ \chi_\gamma \}_{\gamma \in \ff_q}$ where 
\begin{equation}\label{chi gamma y}
\chi_{\gamma}(y) = \zeta_{p}^{\Tr_{q/p}(\gamma y)} 
\end{equation}
for $y \in \ff_q$. Thus, since $R_{k} = \langle \omega^{k} \rangle = C_{0}^{k,q}$, the eigenvalues of $\G(k,q)$ are 
\begin{equation} \label{chi gamma y2}
\lambda_{\gamma} = \chi_{\gamma}(R_{k}) = 
    \sum_{y\in R_{k}}\chi_{\gamma}(y)=\sum_{y\in C_{0}^{(k,q)}}\zeta_{p}^{\Tr_{q/p}(\gamma y)}.
\end{equation}

We have $\ff_q = \{0\} \cup C_0^{(k,q)} \cup \cdots \cup C_{k-1}^{(k,q)}$, a disjoint union, and  
$\# C_i^{(k,q)} = \# \langle \omega^k \rangle = \frac{q-1}k$ for every $i = 0, \ldots, k-1$. 
For $\gamma=0$ we have 
$$\lambda_0=\chi_0(R_k) = |R_k| = n,$$
since $\chi_0$ is the principal character. 
This is in accordance with the fact that since $\G(k,q)$ is $n$-regular with $n=\frac{q-1}k$, then $n$ is an eigenvalue of $\G(k,q)$. 
If $\gamma \in C_{i}^{(k,q)}$ then $\gamma y$ runs over $C_{i}^{(k,q)}$ when $y$ runs over $C_{0}^{(k,q)}$
and thus, by \eqref{chi gamma y2}, we have 
\begin{equation}\label{AutTq}
\lambda_{\gamma} = \sum_{x\in C_{i}^{(k,q)}} \zeta_{p}^{\Tr_{q/p}(x)} = \eta_{i}^{(k,q)}
\end{equation} 
which does not depend on $\gamma$.

Let $\eta_{i_1}, \ldots,\eta_{i_s}$ be the different Gaussian periods.
Notice that each $\gamma \in C_{i_l}^{(k,q)}$ gives the same $\lambda_\gamma$ and $|C_{i_l}^{(k,q)}|=|C_0^{(k,q)}|=n$.
Thus, it is clear that the multiplicity of $\lambda_\gamma$ is 
$$m(\lambda_0)  = 1+\sum_{\substack{0\le j \le k-1 \\[.5mm] \eta_j=n}} |C_j^{(k,q)}|  \qquad \text{and} \qquad 
m(\lambda_\gamma) = \sum_{\substack{ 0\le j \le k-1 \\[0.5mm] \eta_{i_l} = \eta_j}} |C_j^{(k,q)}| \qquad (\text{for } \gamma \ne 0),$$
that is $m(n)=1+\mu n$ and $m(\eta_{i_l}) = \mu_{i_l} n$  for $1 \le l \le s$.

If $A$ is the adjacency matrix of $\G(k,q)$ then $J-A-I$ is the adjacency matrix of $\bar \G(k,q)$, where $J$ stands for the all 
$1$'s matrix. Since $\G(k,q)$ is $n$-regular with $q$ vertices, then $\bar \G(k,q)$ is $(q-n-1)$-regular, that is $\bar \lambda_0= q-n-1=(k-1)n$.
The remaining eigenvalues of $\bar \G(k,q)$ are $-1-\lambda$ where $\lambda$ are the non-trivial eigenvalues, and 
hence the result follows by  \eqref{spec Gkq}.

It remains to show items ($a$) and ($b$).

\noindent
($a$) Being $n$-regular, $\G(k,q)$ is connected if and only if the multiplicity of $n$ is 1, i.e.\@ if $\mu=0$. 

\noindent
($b$) Expression \eqref{spec Gkq} gives the spectra of $\G(k,q)$ and $\bar \G(k,q)$ in terms of the Gaussian periods $\eta_{i}^{(N,q)}$ for $k \mid \frac{q-1}2$. If $k$ satisfies $k \mid \frac{q-1}{p-1}$ then $k=N$, by \eqref{N and n}, and hence all the Gaussian periods 
$\eta_i^{(k,q)}$ are integers, by \eqref{int gp}, which clearly implies that $Spec(\G(k,q))$ is integral. The same happens for 
$Spec( \bar \G(k,q))$, and the result follows.  
\end{proof}

\begin{rem}
If all the Gaussian periods are different, $\eta_i \ne \eta_j$ for $0 \le i < j \le k-1$, then 
$Spec(\G(k,q)) = \{ [n]^1, [\eta_0]^n, [\eta_1]^n, \ldots, [\eta_{k-1}]^n\}$. 
This holds, for instance, for Paley graphs $P(q)=\G(2,q)$, as one can see in \eqref{spec paley} in Example \ref{spec G2q}, 
and also for $\G(3,q)$ and $\G(4,q)$ in the non-semiprimitive case (see Theorems \ref{gp3q} and \ref{gp4q} and ($iii$) in 
Remark \ref{rem gp34q}). 
\end{rem}

The \textit{period polynomial} is defined by 
$\Psi_{(k,q)}(X) = \prod_{i=0}^{k-1} (X-\eta_i^{(k,q)})$
where $\eta_i^{(k,q)}$ are the Gaussian periods. In the previous notations we have the following direct consequence of 
Theorem \ref{Spectro Gkq}.

\begin{coro} \label{period pol}
Let $q=p^m$ with $p$ prime and let $k\in \N$ such that $k \mid q-1$ and $k\mid \frac{q-1}2$ if $p$ is odd. 
Then, the period polynomial satisfies
$$\Psi_{(k,q)}(X) = \frac{P_{\G(k,q)}(X)}{X-n}$$ 
where $n=\frac{q-1}k$ and $P_{\G(k,q)}(X)$ denotes the characteristic polynomial of the graph $\G(k,q)$.
\end{coro}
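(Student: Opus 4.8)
The plan is to deduce the identity directly from the spectral description in Theorem~\ref{Spectro Gkq}. First I would recall the standard fact that the characteristic polynomial of a graph is the product $\prod_\lambda (X-\lambda)^{m(\lambda)}$ over its distinct eigenvalues $\lambda$, weighted by their multiplicities $m(\lambda)$. The hypotheses $k\mid q-1$ and $k\mid\frac{q-1}{2}$ (for $p$ odd) are exactly those of Theorem~\ref{Spectro Gkq}, so I may use \eqref{spec Gkq} to list the eigenvalues of $\G(k,q)$: the regularity eigenvalue $n$, coming from the principal character $\chi_0$ in \eqref{chi gamma y2}, together with the Gaussian periods $\eta_0^{(k,q)},\ldots,\eta_{k-1}^{(k,q)}$, coming from the nonprincipal characters via \eqref{AutTq}.

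Next I would isolate, inside $P_{\G(k,q)}(X)$, the single linear factor $X-n$ attributable to the principal character. Since the remaining eigenvalues are precisely the Gaussian periods $\eta_i^{(k,q)}$, dividing out this factor leaves a polynomial whose roots are exactly these periods, which one then seeks to identify with the period polynomial $\Psi_{(k,q)}(X)=\prod_{i=0}^{k-1}(X-\eta_i^{(k,q)})$. Concretely I would compare $P_{\G(k,q)}(X)/(X-n)$ with $\Psi_{(k,q)}(X)$ by matching their roots together with the multiplicities recorded in \eqref{numbers} and established in the proof of Theorem~\ref{Spectro Gkq}, so that the quotient by the regularity factor recovers exactly the period polynomial.

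The main obstacle is the bookkeeping of multiplicities. Each coset $C_i^{(k,q)}$ has size $n$ and contributes its period $\eta_i$ as an eigenvalue, so in $P_{\G(k,q)}$ the value $\eta_{i_l}$ occurs with multiplicity $\mu_{i_l}n$ and the value $n$ with multiplicity $1+\mu n$, whereas among the $k$ periods themselves the value $\eta_{i_l}$ occurs $\mu_{i_l}$ times and the value $n$ occurs $\mu$ times, with $\mu+\sum_l\mu_{i_l}=k$. The crux of the proof is therefore to reconcile these two multiplicity patterns after removing the one factor $X-n$ coming from $\chi_0$: one must account correctly for the block of size $n$ attached to each coset when passing between the characteristic polynomial and the period polynomial. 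This is the step I expect to require the most care, since it is exactly here that the precise relationship $\Psi_{(k,q)}(X)=P_{\G(k,q)}(X)/(X-n)$ is pinned down, and where any additional hypothesis needed for the identity must be invoked.
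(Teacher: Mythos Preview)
Your approach is exactly the one the paper has in mind: the corollary is stated as a ``direct consequence'' of Theorem~\ref{Spectro Gkq}, with no further argument, so the intended proof is precisely to read off the factorisation of $P_{\G(k,q)}(X)$ from \eqref{spec Gkq} and compare it with the definition of $\Psi_{(k,q)}$.

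However, the obstacle you flag in your final paragraph is not mere bookkeeping---it is a genuine discrepancy in the statement itself. From \eqref{spec Gkq} one obtains
\[
P_{\G(k,q)}(X) \;=\; (X-n)^{1+\mu n}\prod_{l=1}^{s}(X-\eta_{i_l})^{\mu_{i_l}n},
\]
so $P_{\G(k,q)}(X)/(X-n)$ has degree $\mu n + \sum_l \mu_{i_l} n = kn = q-1$. On the other hand the period polynomial, as defined immediately before the corollary, is
\[
\Psi_{(k,q)}(X) \;=\; \prod_{i=0}^{k-1}(X-\eta_i^{(k,q)}) \;=\; (X-n)^{\mu}\prod_{l=1}^{s}(X-\eta_{i_l})^{\mu_{i_l}},
\]
a polynomial of degree $k$. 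The two sides cannot coincide unless $n=1$. What the comparison actually yields is
\[
\Psi_{(k,q)}(X)^{\,n} \;=\; \frac{P_{\G(k,q)}(X)}{X-n},
\]
and this is in fact the formula the paper uses in Example~\ref{spec G2q}, where for $k=2$ it writes $\Psi_{(2,q)}(X) = (X-\eta)^n(X+1+\eta)^n$, of degree $2n=q-1$ rather than $2$. So your instinct that the multiplicity reconciliation ``is exactly here that the precise relationship is pinned down, and where any additional hypothesis needed for the identity must be invoked'' is right on target---except that no extra hypothesis repairs it; the stated identity is missing the exponent $n$.
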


The Ihara zeta function $\zeta_\G(u)$ for a regular graph $\G$ has a determinantial expression in spectral terms. 
For a GP-graph $\G(k,q)$, we have (see (8.3) in \cite{PV1} for details)
\begin{equation} \label{ihara}
\zeta_{\G} (u) = \frac{(1-u^2)^{q-\frac{nq}2}}{\prod_{i=1}^q (1-\lambda_i u - (n-1)u^2)^{m_i}}
\end{equation}
where $\{ [\lambda_1]^{m_1}, \ldots, [\lambda_q]^{m_q}\}$ is the spectrum of $\G(k,q)$.

In the next example we will obtain the already known spectrum of classical Paley graphs $P(q)$ throughout Gaussian periods.

\begin{exam}[Paley graphs] \label{spec G2q}
We will use that the periods for $k=2$ are known and that $\G=\G(2,q)=P(q)$ for $q$ odd.
Let $q=p^{m}$ with $p$ an odd prime and $m=2t$ and let $n=\frac{q-1}2$. 
We will show that $\Gamma(2,q)$ is a connected strongly regular graph having integral spectrum 
$$Spec(\Gamma(2,q)) = \{ [n]^1, [\eta]^{n}, [-1-\eta]^n\}$$
with $\eta = \eta_0^{(2,q)} = \tfrac{-1-\sigma p^t}2$ 
where $\sigma = 1$ if $p\equiv 1 \pmod 4$ and $\sigma = (-1)^t$ if $p\equiv 3 \pmod 4$. 

In fact, $2\mid \frac{q-1}2$ since $p$ is odd and $m$ is even, and hence 
$Spec(\Gamma) = \{ [n]^{1+\mu n}, [\eta_0]^{\mu_0 n}, [\eta_1]^{\mu _1 n} \}$ 
by Theorem \ref{Spectro Gkq}, where $\eta_i=\eta_i^{(2,q)}$ are the Gaussian periods for $i=1,2$. The spectrum is integral since $2$ also divides $\frac{p^m-1}{p-1}$, $m$ being even. 

The above periods are given in Lemma~12 in \cite{DY}. In our notations, we have $\eta_1=-1-\eta_0$ and 
$\eta_0= \frac{-1+(-1)^{m-1}\sqrt q}2$ if $p\equiv 1 \pmod 4$ and 
$\eta_0= \frac{-1+(-1)^{m-1} \sqrt{-1}^m \sqrt q}2$ if $p\equiv 3 \pmod 4$. Using that $m=2t$ we get that 
$$\eta_0 = \begin{cases} 
\frac{-1-p^t}2 & \qquad \text{if $p\equiv 1 \pmod 4$}, \\[1mm]
\tfrac{-1-(-1)^t p^t}2 & \qquad \text{if $p\equiv 3 \pmod 4$}.
\end{cases} $$
Now, it is clear that $\eta_0 \ne \eta_1$ and $\eta_i \ne n$, $i=0,1$, and hence $\mu=0$ and $\mu_0=\mu_1=1$. In this way, the spectrum is as given above. This coincides with the known spectrum 
\begin{equation} \label{spec paley}
Spec(P(q)) = \{[\tfrac{q-1}2]^1, [\tfrac{\sqrt{q}-1}2]^{n}, [\tfrac{-\sqrt{q}-1}2]^{n}\}.
\end{equation}
From this and using \eqref{spec paley} one can get the Ihara zeta function of $P(q)$.
The period polynomial is 
$$\Psi_{(2,q)}(X) = (x-\eta)^n(x+1+\eta)^n = (x^2+x-\eta(1+\eta))^n = (x^2+x+\tfrac{q-\sqrt{q}}4)^n.$$

The graph $\G(2,q)$ 
is connected since the multiplicity of the regularity degree is 1. Since $p$ is odd and $m$ is even it is clear that the eigenvalues are integers. Finally, since $\G(2,q)$ is regular and connected and has exactly 3 eigenvalues it is a strongly regular graph. 
All these facts are of course well-known.
\hfill $\lozenge$
\end{exam}

\section{The spectrum of semiprimitive GP-graphs $\G(k,q)$}
Using that the Gaussian periods in the semiprimitive case are known, we give the spectrum of the corresponding 
GP-graphs $\G(k,q)$ and of their complements $\bar \G(k,q)$. We then use these spectra to give structural properties of 
the graphs $\G(k,q)$ in this case.

We recall that, in the study of 2-weight irreducible cyclic codes, the \textit{semiprimitive case} corresponds to 
$-1$ being a power of $p$ modulo $k$ (see \cite{SW}). Since we also assumed from the beginning that $k \mid q-1$, where $q=p^m$ with $p$ prime, we have that semiprimitiveness is equivalent to $k=2$ and $q$ odd or else 
\begin{equation} \label{semip cond}
k \mid p^t+1 \quad \text{ for some } \quad t \mid \tfrac m2
\end{equation}
for $k>2$. With respect to the graphs, notice that if $k=2$ with $q$ odd, we have to further require that $q\equiv 1 \pmod{4}$, since otherwise the graph $\G(k,q)$ is directed. 
On the other hand, if $k=p^{\frac m2}+1$ then the graph $\G(k,q)$ is not connected (see Proposition 4.6 in \cite{PV1}).

\begin{defi} 
We say that $(k, q)$ with $k=2$ and $q\equiv 1 \pmod 4$ or $k>2$ satisfying \eqref{semip cond} and $k \ne p^{\frac m2}+1$ is a \textit{semiprimitive pair} of integers. 
We will refer to $\G(k,q)$ as a \textit{semiprimitive GP-graph} (hence $\G(k,q)$ is simple and connected).
\end{defi}

\begin{exam} \label{semipr}
($i$) For instance, if $p=3$ and $m=4$, to find the semiprimitive pairs $(k,81)$ we take $k\mid 3^2+1=2\cdot 5$ and $k\mid 3^1+1=4$.
Hence $k=2, 4$ or $5$, while $k=10$ is not allowed since $10=3^{\frac m2}+1$. 

\noindent ($ii$) 
Three infinite families of semiprimitive pairs, for $p$ prime and $m=2t \ge 2$, are:   

\begin{itemize} 
\item $(2,p^{2t})$ with $p$ odd,  

\item $(3,p^{2t})$ with $p\equiv 2 \pmod 3$ and $t\ge 1$ (where $t\ge 2$ if $p=2$), 

\item $(4,p^{2t})$ with $p\equiv 3 \pmod 4$ and $t\ge 1$ (where $t\ge 2$ if $p=3$). 
\end{itemize}
The first of the three families of pairs give rise to the classical Paley graphs $\G(2,p^{2t})$.

\noindent ($iii$) 
Another infinite family of semiprimitive pairs is given by $(p^\ell+1,p^m)$ 
with $p$ prime, $m\ge 2$, $\ell\mid m$ and $\frac{m}{\ell}$ even. 
They give the GP-graphs $\G(q^\ell+1,q^m)$, with $q=p$, considered in \cite{PV1} for $q$ a power of $p$.

\noindent ($iv$) 
Using the previous definition and items ($ii$) and ($iii$), we give a list of the smallest semiprimitive pairs $(k,q)$ with $q=p^m$ for $p=2,3,5,7$ and $m=2,4,6,8$.  

\begin{table}[H] \label{tabla 1}
\caption{Values of $k$ for small semiprimitive pairs $(k,p^m)$.}
\begin{tabular}{|c|p{1.25cm}|p{3cm}|p{3.25cm}|p{3.25cm}|} \hline 
      & $m=2$ & $m=4$ & $m=6$ & $m=8$ \\ \hline
$p=2$ & --    & 3 & 3 & 5 \\ \hline 
$p=3$ & --    & 2, 4, \textbf{5} & 2, 4, \textbf{7}, \textbf{14} & 2, 4, \textbf{5}, \textbf{10}, \textbf{41} \\ \hline 
$p=5$ & 2, \textbf{3}  & 2, \textbf{3}, 6, \textbf{13} & 2, \textbf{3}, 6, \textbf{7}, \textbf{9}, \textbf{14}, \textbf{18}, 
\textbf{21},  \textbf{42}, \textbf{63} & 2, \textbf{3}, 6, \textbf{13}, 26, \textbf{313} \\ \hline
$p=7$ & 2, \textbf{4}  & 2, \textbf{4}, \textbf{5}, 8, \textbf{10}, \textbf{25} & 2, \textbf{4}, \textbf{5}, 8, 
\textbf{10}, \textbf{25}, \textbf{43}, 50, \textbf{86}, \textbf{172} & 2, \textbf{4}, \textbf{5}, 8, \textbf{10}, \textbf{25}, 50, 
\textbf{1201} \\  
\hline
\end{tabular}
\end{table}

Here we have marked in bold those $k$ which are different from 2 and not of the type $p^\ell+1$ for some $p$ and $\ell$, showing that in general there are much more semiprimitive graphs $\G(k,q)$ than Paley graphs or the GP-graphs of the form $\G(p^\ell+1,p^m)$. 
\hfill $\lozenge$
\end{exam}

We now give the spectrum of semiprimitive GP-graphs $\G(k,q)$ explicitly. We will need the following notation.
Define the sign 
\begin{equation} \label{signo} 
\sigma = (-1)^{s+1}
\end{equation}
where $s=\frac{m}{2t}$ and $t$ is the least integer $j$ such that $k \mid p^j+1$ (hence $s \ge 1$).

\begin{thm} \label{semiprimitive}
Let $(k, q)$ be a semiprimitive pair with $q=p^m$, $m$ even, and put $n=\frac{q-1}k$.  
Then, the spectrum of $\G=\G(k,q)$ and $\bar \G = \bar \G(k,q)$ are respectively given by 
\begin{align*}
Spec(\G) & = \{ [n]^1, [\lambda_1]^n, [\lambda_2]^{(k-1)n}\}, \\[1.5mm] 
Spec(\bar \G) & = \{[(k-1)n]^1, [(k-1) \lambda_2]^n, [-1-\lambda_2]^{(k-1)n}\},
\end{align*}
where 
\begin{equation} \label{autoval semip}
	 \lambda_1 = \tfrac{\sigma (k-1) p^{\frac m2}-1}{k}
	   \qquad  \text{and} \qquad  
		\lambda_2 = - \tfrac{\sigma p^{\frac m2}+1}{k} 
\end{equation}
with $\sigma$ as given in \eqref{signo}. 
\end{thm}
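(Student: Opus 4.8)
The plan is to apply Theorem~\ref{Spectro Gkq} and then substitute the known values of the Gaussian periods in the semiprimitive case. First I would note that since $(k,q)$ is a semiprimitive pair with $m$ even, we have $k\mid p^t+1$ for some $t\mid \tfrac m2$, hence $k\mid \tfrac{q-1}{p-1}$ (because $p^t+1\mid p^{2t}-1\mid \tfrac{q-1}{p-1}$ when $\tfrac m{2t}$ is an integer, using that $\tfrac{p^{2t}-1}{p-1}$ divides $\tfrac{p^m-1}{p-1}$), so by part ($b$) of Theorem~\ref{Spectro Gkq} the spectrum is integral and given in terms of the periods $\eta_i^{(k,q)}$; moreover $k\ne p^{m/2}+1$ guarantees $\mu=0$ by the connectivity discussion, so $m(n)=1$.

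Next I would invoke the classical evaluation of semiprimitive Gaussian periods (e.g.\ Baumert--McEliece, or Lemma~15 in \cite{DY}): when $-1$ is a power of $p$ modulo $k$, exactly one period takes one value and the remaining $k-1$ periods take another, namely (with $s=\tfrac m{2t}$ and $\sigma=(-1)^{s+1}$)
\begin{equation*}
\eta_{i_0}^{(k,q)} = \tfrac{\sigma(k-1)p^{m/2}-1}{k}, \qquad
\eta_j^{(k,q)} = -\tfrac{\sigma p^{m/2}+1}{k} \quad (j\ne i_0).
\end{equation*}
I would identify these as $\lambda_1$ and $\lambda_2$ respectively, and check that neither equals $n=\tfrac{q-1}k$ (this uses $k\neq p^{m/2}+1$ again for $\lambda_1$, and is immediate for $\lambda_2<0$), so that in the notation of \eqref{numbers} we get $\mu_1=1$ for the value $\lambda_1$ and $\mu_2=k-1$ for the value $\lambda_2$. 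Plugging $\mu=0$, $\mu_{i_1}=1$, $\mu_{i_2}=k-1$ into \eqref{spec Gkq} yields $Spec(\G)=\{[n]^1,[\lambda_1]^n,[\lambda_2]^{(k-1)n}\}$, and the complement formula in Theorem~\ref{Spectro Gkq} gives the eigenvalues $(k-1)n$, $-1-\lambda_1$, $-1-\lambda_2$ with the same multiplicities; a short computation shows $-1-\lambda_1 = (k-1)\lambda_2$, which produces the stated form of $Spec(\bar\G)$.

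The main obstacle is bookkeeping rather than conceptual: one must correctly match the sign conventions between the source for the semiprimitive periods and the present normalization of $\eta_i^{(k,q)}$, verify that $s=\tfrac m{2t}$ is an integer and that $\sigma$ is well-defined (i.e.\ $t\mid \tfrac m2$ follows from semiprimitiveness), and confirm the identity $-1-\lambda_1=(k-1)\lambda_2$ as well as $-1-\lambda_2 = \tfrac{\sigma p^{m/2}+1}{k}-1$ matching the third complement eigenvalue. I would also briefly treat the degenerate case $k=2$ (Paley graphs) separately, noting it is already covered in Example~\ref{spec G2q} and is consistent with the formulas since then $(k-1)n=n$ and $\lambda_1=\lambda_2$ forces the $3$-eigenvalue shape to collapse appropriately. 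The integrality of $\lambda_1,\lambda_2$ is automatic from part~($b$), but one can also see it directly from the explicit formulas together with the congruence in \eqref{int gp}.
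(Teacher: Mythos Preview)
Your approach is essentially the paper's own: apply Theorem~\ref{Spectro Gkq}, plug in the semiprimitive Gaussian periods (the paper cites Lemma~13 in \cite{DY}, not~15), verify that the three values $n,\lambda_1,\lambda_2$ are pairwise distinct, and read off the complement from the general formula (the identity $-1-\lambda_1=(k-1)\lambda_2$ is exactly what makes the stated form of $Spec(\bar\G)$ come out). Two small slips to fix: when $\sigma=-1$ one has $\lambda_2=\tfrac{p^{m/2}-1}{k}>0$, so ``$\lambda_2<0$'' is not the reason $\lambda_2\neq n$ (rather $p^{m/2}\neq p^m$); and in the $k=2$ aside, $\lambda_1\neq\lambda_2$ --- what collapses is only the multiplicity $(k-1)n=n$, recovering the self-complementary Paley spectrum.
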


\begin{proof}
We first compute the spectrum of $\G = \G(k,q)$, which by Theorem \ref{Spectro Gkq} is given in terms of Gaussian periods. 
From Lemma 13 in \cite{DY} the Gaussian periods $\eta_j^{(k,q)}$, for $j=0, \ldots, k-1$, are given by:

($a$) If $p$, $\alpha=\frac{p^t+1}k$ and $s$ are all odd then 
\begin{equation} \label{gp1} 
\eta_j^{(k,q)} = \begin{cases}
\frac{(k-1)\sqrt q -1}k & \qquad \text{if } j=\frac k2, \\[1mm]
-\frac{\sqrt q +1}k & \qquad \text{if } j \ne \frac k2.
\end{cases}
\end{equation}

($b$) In any other case we have $\sigma=(-1)^{s+1}$ and 
\begin{equation} \label{gp2} 
\eta_j^{(k,q)} = \begin{cases}
\frac{\sigma (k-1)\sqrt q -1}k & \qquad \text{if } j=0, \\[1mm]
-\frac{\sigma \sqrt q +1}k & \qquad \text{if } j \ne 0.
\end{cases}
\end{equation}

Thus, by Theorem \ref{Spectro Gkq}, the spectrum of $\G(k,q)$ is $Spec(\G(k,q)) = \{ [n]^1, [\eta_{k/2}]^n, [\eta_0]^{(k-1)n}\}$ if 
$p$, $\alpha$, $s$ are odd or $Spec(\G(k,q)) = \{ [n]^1, [\eta_0]^n, [\eta_1]^{(k-1)n}\}$ otherwise.

Suppose we are in case ($a$), i.e.\@ $p$, $\alpha$ and $s$ are odd. 
Then we have 
$$\lambda_1 = \eta_{k/2} =  \tfrac{(k-1)p^{\frac m2}+1}k 
\qquad  \text{and} \qquad 
\lambda_2 = \eta_j = \eta_0 = -\tfrac{p^{\frac m2}+1}{k} \quad (j\ne \tfrac k2).$$
It is clear that $\lambda_2\ne n$ and $\lambda_2\ne \lambda_1$. Also, $n \ne \lambda_1$ since 
$k \ne p^{\frac m2}+1$. 
Thus, all three eigenvalues are different and their corresponding multiplicities are as given in the statement. 

In case ($b$), we have 
$$\eta_0 = \tfrac{\sigma(k-1)p^{\frac m2}+1}k \qquad \text{and} \qquad 
\eta_j = -\tfrac{\sigma p^{\frac m2}-1}k \quad \text{ for } j\ne 0.$$ 
Again, one checks that $\eta_0 \ne \eta_j$, $\eta_0\ne n$ and $\eta_j \ne n$ for every $j\ne \frac k2$.  
Thus, the corresponding multiplicities are as stated in the proposition. 

Combining cases ($a$) and ($b$) we get \eqref{autoval semip}, as we wanted to show.
Finally, the spectrum of $\bar \G(k,q)$ follows by Theorem \ref{Spectro Gkq}.
\end{proof}

\noindent
\textit{Note.} Since $\lambda_1, \lambda_2 \in \Z$, we have that 
$\sigma=\pm 1$ if and only if $k \mid p^{\frac m2} \pm 1$, respectively. 

\begin{rem}
Recently, we computed the spectrum of the GP-graphs $\G_{q,m}(\ell)=\G(q^\ell+1,q^m)$ and $\bar \G_{q,m}(\ell)$, 
with $\ell \mid m$ and $\frac{m}{\ell}$ even (see Theorem~3.5 and Proposition~4.3 in \cite{PV1}), 
by using certain trigonometric sums associated with the quadratic forms 
$$Q_{\gamma,\ell}(x) = \Tr_{p^m/p} (\gamma x^{q^\ell+1}) \quad \text{with } \quad \gamma \in \ff_{p^m}^*.$$ 
By ($ii$) in Example \ref{semipr}, the graph $\G(p^\ell+1,p^m)$, i.e.\@ with $q=p$ prime, 
is semiprimitive and hence its spectrum is given by Theorem \ref{Spectro Gkq}. Indeed, 
$Spec(\G(p^\ell+1,p^m)=\{ [n]^1, [\lambda_1]^{n}, [\lambda_2]^{p^\ell n} \}$ 
where 
\begin{equation} \label{spec pl+1} 
n = \tfrac{p^m-1}{p^\ell+1}, \qquad \quad \lambda_1 = \tfrac{\sigma p^{\frac m2 + \ell}-1}{p^\ell+1}, \qquad \quad 
\lambda_2 = -\tfrac{\sigma p^{\frac m2}+1}{p^\ell+1},
\end{equation}
with $\sigma = (-1)^{\frac{m}{2\ell}+1}$. 
It is reassuring that both computations of the spectrum coincide after using these two different methods. The same happens for the complementary graphs. 
\end{rem}

Some graph invariants are given directly in terms of the spectrum $\lambda_0, \lambda_1, \ldots, \lambda_{q-1}$ of $\G$. For instance,  the energy of $\G$ and the number of walks of length $r$ in $\G$ are given by
$$E(\G)=\sum_{i=1}^q |\lambda_i| \qquad \text{ and } \qquad w_r(\G) = \sum_{i=1}^q \lambda_i^r.$$ 
Also, if $\G$ is connected and $n$-regular, the number of spanning trees of $\G$ is given by 
$$t(\G)= \tfrac 1q \theta_1 \cdots \theta_{q-1},$$ 
where $\theta_i = n-\lambda_i$ are the Laplace eigenvalues for $i=1,\ldots,q-1$.    
In our case, after some straightforward calculations we have 
\begin{equation} \label{energy}
\begin{split}
E(\G(k,q)) & = n \, \{ 2(p^{\frac m2} +\sigma \lambda_2) + 1 +\sigma \}, \\[1mm]
w_r(\G(k,q)) & = n \, ( n^{r-1} + \sigma^r \{(p^{\frac m2} +\sigma \lambda_2)^r + (k-1) (\sigma \lambda_2)^r \}), \\[1mm]
t(\G(k,q)) & = (-\sigma)^{q-1} \, p^{\frac m2 (q-3)} (\lambda_2+1)^n \lambda_2^{(k-1)n}. 
\end{split}
\end{equation}

Now, we will give some structural properties of the graphs $\G(k,q)$ throughout the spectrum.

\begin{prop} \label{srg}
Let $(k, q)$ be a semiprimitive pair with $q=p^m$, $m=2ts$ where $t$ is the least integer satisfying $k\mid p^t+1$ and put 
$n=\frac{q-1}k$. 
Then we have:

\begin{enumerate}[$(a)$]
\item $\G(k,q)$ and $\bar \G(k,q)$ are primitive, non-bipartite, integral, strongly regular graphs with corresponding parameters 
$srg(q,n,e,d)$ and $srg(q,(k-1)n,e',d')$ given by 
$$e=d +  (\sigma p^{\frac m2} + 2\lambda_2), \quad d=n + (p^{\frac m2} +\lambda_2) \lambda_2),  
\quad e'=q-2-2n+d, \quad d'=q-2n + e.$$

\smallskip 
\item $\G(k,q)$ and $\bar \G(k,q)$ are distance regular graphs of diameter 2 with intersection arrays 
$$\mathcal{A} = \{n,n-e-1;1,d\} \qquad \text{and} \qquad 
\bar{\mathcal{A}} = \{(k-1)q, n-d; 1, q-2n+e\}.$$

\smallskip
\item If $s$ is odd then $\G(k,q)$ and $\bar \G(k,q)$ are Latin square graphs  
with parameters
\begin{equation} \label{LSquares} 
L_\delta(w) = srg(w^2, \delta(w-1), \delta^2-3\delta+w, \delta(\delta-1)),
\end{equation}
where $w=f-g$, $\delta=-g$ and $f>0>g$ are the non-trivial eigenvalues of $\G(k,q)$ or $\bar \G(k,q)$. 
\end{enumerate}
\end{prop}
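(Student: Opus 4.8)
The plan is to read all three statements off the spectral description in Theorem~\ref{semiprimitive}, using the classical dictionary between connected regular graphs with three eigenvalues and strongly regular graphs. For $(a)$ I would first recall that a connected regular graph is strongly regular if and only if it has exactly three distinct eigenvalues. Since $(k,q)$ is a semiprimitive pair we have $k\neq p^{\frac m2}+1$, so $\mu=0$ and $\G(k,q)$ is connected by Theorem~\ref{Spectro Gkq}$(a)$, while $n,\lambda_1,\lambda_2$ are pairwise distinct as checked in the proof of Theorem~\ref{semiprimitive}; hence $\G(k,q)$, and likewise $\bar\G(k,q)$, is strongly regular. For the parameters I would use that a connected $K$-regular graph with remaining eigenvalues $r,s$ is $srg(v,K,e,d)$ with $d=K+rs$ and $e=d+r+s$; taking $K=n$ and $\{r,s\}=\{\lambda_1,\lambda_2\}$, and simplifying via the identities $\lambda_1+\lambda_2=\sigma p^{\frac m2}+2\lambda_2$ and $\lambda_1\lambda_2=(\sigma p^{\frac m2}+\lambda_2)\lambda_2$ (both immediate from \eqref{autoval semip}), yields the stated $e,d$, and the complementation formulas with $v=q$, $K=n$ give $e',d'$. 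Integrality is Theorem~\ref{Spectro Gkq}$(b)$. Primitivity holds because both $\G(k,q)$ and $\bar\G(k,q)$ are connected — the first since $\mu=0$, the second since $(k-1)n$ has multiplicity $1$ in $Spec(\bar\G(k,q))$ — so neither is a disjoint union of cliques nor a complete multipartite graph; and $\G(k,q)$ is non-bipartite because a connected bipartite graph has $-K$ among its eigenvalues, whereas its least eigenvalue $\lambda_2$ satisfies $|\lambda_2|<n$. For $(b)$ I would then invoke that a connected strongly regular graph is automatically distance-regular of diameter $2$ with intersection array $\{K,\,K-e-1;\,1,\,d\}$, substitute the values from $(a)$, and simplify the array of $\bar\G(k,q)$ using $q=kn+1$.

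For $(c)$, assume $s$ odd, so $\sigma=(-1)^{s+1}=1$ and hence $k\mid p^{\frac m2}+1$ by the Note following Theorem~\ref{semiprimitive}. Here I would build the Latin square structure explicitly rather than only match parameters, since having the parameters of $L_\delta(w)$ is much weaker than being such a graph. As $\ff_{p^{m/2}}^{*}=\langle\omega^{p^{m/2}+1}\rangle\subseteq\langle\omega^{k}\rangle=R_k$, every line $c\,\ff_{p^{m/2}}$ with $c\in R_k$ is a clique of $\G(k,q)$ of size $w:=p^{m/2}$, and its additive translates form a parallel class. Viewing $\ff_q$ as a $2$-dimensional space over $\ff_{p^{m/2}}$, two lines in distinct directions meet only at $0$, so the $\delta:=\tfrac{p^{m/2}+1}{k}$ directions indexed by coset representatives $c_1,\dots,c_\delta$ of $\ff_{p^{m/2}}^{*}$ in $R_k$ form a net of order $w$ and degree $\delta$ on the point set $\ff_q$; since $R_k=\bigsqcup_{i=1}^{\delta}c_i\,\ff_{p^{m/2}}^{*}$, the collinearity graph of this net is exactly $\G(k,q)$, so $\G(k,q)\cong L_\delta(w)$. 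Because $\ff_q^{*}\smallsetminus R_k$ is the union of the remaining $p^{m/2}+1-\delta=(k-1)\delta$ line-directions of $\mathrm{AG}(2,p^{m/2})$, the same argument gives $\bar\G(k,q)\cong L_{(k-1)\delta}(w)$. Finally a short computation with \eqref{autoval semip} and $\sigma=1$ confirms that the non-trivial eigenvalues $f>0>g$ of either graph satisfy $w=f-g=p^{m/2}$, while $-g$ equals $\delta$ for $\G(k,q)$ and $(k-1)\delta$ for $\bar\G(k,q)$, so these agree with \eqref{LSquares} and reproduce the $srg$-parameters of $(a)$.

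The calculations in $(a)$ and $(b)$ are routine bookkeeping (substitute $n=\tfrac{q-1}{k}$, $q=p^{m}$, $\sigma^{2}=1$, and simplify). The main obstacle is the upgrade in $(c)$: having the parameters of $L_\delta(w)$ is automatic once the eigenvalues are integers with $w,\delta\in\N$, but being an actual Latin square graph is strictly stronger and forces the explicit net, so the delicate point is checking the net axioms — in particular that two lines from different parallel classes meet in exactly one point, which is precisely where $2$-dimensionality of $\ff_q$ over $\ff_{p^{m/2}}$, forced by $s$ being odd, enters.
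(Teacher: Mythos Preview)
Your treatment of parts $(a)$ and $(b)$ is essentially the paper's: both argue connectedness from $\mu=0$, read off the srg parameters from the identities $d=n+\lambda_1\lambda_2$ and $e=d+\lambda_1+\lambda_2$ (your rewriting $\lambda_1=\sigma p^{m/2}+\lambda_2$ is exactly what is used), and then quote the standard fact that a connected strongly regular graph is distance-regular of diameter~$2$ with intersection array $\{K,K-e-1;1,d\}$.

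For part $(c)$ you take a genuinely different and more concrete route. The paper argues abstractly: it notes that the regularity degree equals an eigenvalue multiplicity, so $\G(k,q)$ has pseudo-Latin, negative-Latin, or conference parameters; it rules out the conference case, checks that the parameters are of pseudo-Latin type precisely when $s$ is odd, and then invokes Neumaier's theorem (Theorem~7.12 in \cite{CvL}) to upgrade ``pseudo-Latin square parameters'' to ``geometric,'' which requires verifying the inequality $f+1\le\tfrac12\,g(g+1)(d+1)$ via a somewhat delicate estimate \eqref{ineq large} bounding $k$ by $p^{m/6}+1$. You instead exhibit the partial geometry explicitly: since $s$ odd gives $k\mid p^{m/2}+1$, the subfield $\ff_{p^{m/2}}^{*}=\langle\omega^{p^{m/2}+1}\rangle$ lies inside $R_k$, so $R_k$ is a union of $\delta=\tfrac{p^{m/2}+1}{k}$ full line-directions of the affine plane $\mathrm{AG}(2,p^{m/2})$, and the resulting subnet of order $w=p^{m/2}$ and degree $\delta$ has $\G(k,q)$ as its point graph. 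This is more elementary (no Neumaier, no inequality chasing), more informative (it names the net), and it handles all odd $s$ uniformly, whereas the paper's estimate $k\le p^{m/6}+1$ tacitly uses $s\ge 3$. One small wording correction: the $2$-dimensionality of $\ff_q$ over $\ff_{p^{m/2}}$ holds for every even $m$; what $s$ odd genuinely buys you is the containment $\ff_{p^{m/2}}^{*}\subseteq R_k$, which is what makes $R_k$ a union of whole line-directions and hence makes your net construction go through.
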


\begin{proof}
We will use the spectral information from Proposition \ref{semiprimitive}. 
We prove first the results for $\G(k,q)$.

($a$) Since the multiplicity of the degree of regularity $n$ is $1$, the graph is connected. 
Also, one can check that $-n$ is not an eigenvalue of $\G(k,q)$ and hence the graph is non-bipartite.
Now, since $k\mid p^t+1$ then $k\mid p^{ts}+1$ if $s$ is odd and $k\mid p^{ts}-1$ if $s$ is even, 
hence $\beta = \frac{p^{\frac m2}+\sigma}k$ is an integer. Hence, since 
$\lambda_1=\sigma(p^{ts}-\beta)$ and $\lambda_2=-\sigma \beta$, by \eqref{autoval semip}, the eigenvalues are all integers (we also know this from Theorem \ref{Spectro Gkq}).

Finally, since the graph is connected, $n$-regular with $q$-vertices and has exactly 3 eigenvalues, it is a strongly regular graph with parameters $srg(q,n,e,d)$. 
We now compute $e$ and $d$. It is known that the non-trivial eigenvalues of an srg graph are of the form 
$\lambda^\pm = \tfrac 12 \{ (e-d) \pm \Delta \}$ 
where $\Delta = \sqrt{(e-d)^2 + 4(n-d)}$.  
Thus, $d= n +\lambda^ + \lambda^-$ and $e = d + \lambda^+ + \lambda^-$.
From this and \eqref{autoval semip} the result follows.  

($b$) Similar to the proof of Corollary 5.3 in \cite{PV1}.

($c$) Note that the regularity degree of $\G=\G(k,q)$ equals the multiplicity of a non-trivial eigenvalue by Theorem \ref{semiprimitive}. Thus, $\G$ is of pseudo-Latin square type graph (PL), of negative Latin square type (NL) or is a conference graph 
(see Proposition 8.14 in \cite{CvL}). 
By definition, a conference graph satisfy 
$2n+(q-1)(e-d)=0$.
It is easy to check that this condition holds for $\G(k,q)$ if and only if 
$\G(1,4)=K_4$, and hence $\G$ is not a conference graph. Put $w=f-g$, where $f,g$ are the non-trivial eigenvalues with $f>0>g$, and 
$\delta=-g$. Then, 
$\G$ is a pseudo-Latin square graph with parameters 
$$PL_\delta(w) = srg(w^2, \delta(w-1), \delta^2-3\delta+w, \delta(\delta-1))$$
or a negative Latin square graph with parameters
\begin{equation} \label{NL}
NL_\delta(w) = srg(w^2, \delta(w+1), \delta^2+3\delta-w, \delta(\delta+1)).
\end{equation}
It is clear that $n=\delta(w-1)$ if and only if $s$ is odd and that for $s$ even $k \ne \delta(w+1)$. 

Thus, we are only left to prove that in the case of $s$ odd, $\G$ is actually a Latin square graph. 
Hence, it is enough to show that $\G$ is geometric, that is, it is the point graph of a partial geometry $pg(a,b,\alpha)$ with parameters
$srg((a+1)(ab+\alpha)\alpha^{-1}, (b+1)a, a-1+b(\alpha-1), (b+1)\alpha)$.
To see this, we use a result of Neumaier (Theorem 7.12 in \cite{CvL}) asserting that if $g<-1$ is integer and 
\begin{equation} \label{conditions}
f+1 \le \tfrac{1}{2} g(g+1)(d+1)
\end{equation}
then $\G$ is the point graph of a partial geometry $pg(a,b,\alpha)$ with $\alpha=b$ or $b+1$. 
We know that for $s$ odd we have that $g<-1$ is integer. Expression \eqref{conditions} takes the form
$$k-1 \le -\tfrac 12 (\sigma \lambda_2+1) (n+1+(\sigma p^{\frac m2} + \lambda_2) \lambda_2).$$
By straightforward calculations, this inequality is equivalent to 
\begin{equation}\label{ineq large}
2k^3(k-1)+k(2p^m+4p^{\frac m2}-2kp^{\frac m2}+2-2k+k^2)\le p^{\frac{3m}2}+3p^m+3p^{\frac m2}+1.
\end{equation}
Notice that the l.h.s.\@ of \eqref{ineq large} increases as $k$ does. By hypothesis $s>1$ is odd, thus 
$k\le K:=p^{\frac{m}{6}}+1$ since $s$ satisfies $k\mid p^{\frac{m}{2s}}+1$ and $s\ge 3$. Therefore, the l.h.s.\@ 
of \eqref{ineq large} is less than 
$$C_K := 2K^4 - K^3 + 2Kp^m + (2-K)2Kp^{\frac m2} +2K(1-K).$$ 
It is not difficult to show that $C_K$ is less than the right hand side of \eqref{ineq large}. Therefore, \eqref{ineq large} is true in this case, and this implies that $\G$ satisfies \eqref{conditions} as required.

Now, it is easy to see that $\bar \G(k,q)$ is also a primitive non-bipartite integral strongly regular graph 
with parameters 
and intersection array as stated. 
The proof that $\bar \G(k,q)$ is a Latin square if $s$ is odd is analogous to the previous one for $\G(k,q)$ and we omit the details. 
Since $\G(k,q)$ is a pseudo-Latin  graph $PL_\delta(w)$ then $\bar \G(k,q)$ is a pseudo-Latin graph $PL_{\delta'}(w)$ with 
$\delta'=u+1-\delta$. Thus, it is enough to check that $\bar \G(k,q)$ is geometric, that is, that inequality \eqref{conditions} holds in this case also with $g'$ in place of $g$. This completes the proof.
\end{proof}

\begin{exam} \label{tablitas srg}
From Theorem \ref{semiprimitive} and Proposition \ref{srg} we obtain Table \ref{tablita srg} below. 
Here $s=\frac{m}{2t}$ where $t$ is the least integer such that $k\mid p^t+1$.
We have marked in bold those graphs $\G(k,p^m)$ with $k \ne p^\ell+1$ for some $\ell \mid \frac m2$.
We point out that, for instance, the graphs with $q=7^4$ do not appear in the Brouwer's lists (\cite{Br}) of strongly regular graphs. \renewcommand{\arraystretch}{1.1}
\begin{small}
\begin{table}[h!]
\caption{Smallest semiprimitive graphs: srg parameters and spectra}  \label{tablita srg}
\begin{tabular}{|c|c|c|c|c|c|} \hline 
 graph 			 & srg parameters & spectrum & $t$ & $s$ & latin square \\ \hline
$\G(3,2^4)$  & $(16, 5, 0, 2)$   &  $\{ [5]^1, [1]^{10}, [-3]^{5} \}$ & 1& 2& no \\ 
$\bar\G(3,2^4)$  & $(16, 10, 6, 6)$   &  $\{ [10]^1, [2]^{5}, [-2]^{10} \}$ & 1& 2& no \\ \hline
$\G(3,2^6)$  & $(64, 21, 8, 6)$  &  $\{ [21]^1, [5]^{21}, [-3]^{42} \}$ & 1& 3& $L_3(8)$ \\ 
$\bar\G(3,2^6)$  & $(64, 42, 26, 30)$  &  $\{ [42]^1, [2]^{42}, [-6]^{21} \}$ & 1& 3& $L_6(8)$ \\ \hline
$\G(\bm{3},\bm{5^2})$ & $(25, 8, 3, 2)$   &  $\{ [8]^1, [3]^{8}, [-2]^{16} \}$ & 1& 1& $L_2(5)$ \\ 
$\bar\G(\bm{3},\bm{5^2})$ & $(25, 16, 9, 12)$   &  $\{ [16]^1, [1]^{16}, [-4]^{8} \}$ & 1& 1& $L_4(5)$ \\ \hline 
$\G(\bm{3},\bm{5^4})$  & $(625, 208, 63, 72)$ &  $\{ [208]^1, [8]^{416}, [-17]^{208} \}$ & 1& 2& no \\ 
$\bar\G(\bm{3},\bm{5^4})$  & $(625, 416, 279, 272)$ &  $\{ [416]^1, [16]^{208}, [-9]^{416} \}$ & 1& 2& no \\ \hline
$\G(4,3^4)$  & $(81, 20, 1, 6)$ &    $\{ [20]^1, [2]^{60}, [-7]^{20},  \}$ & 1& 2& no\\ 
$\bar\G(4,3^4)$  & $(81, 60, 45, 42)$ &    $\{ [60]^1, [6]^{20}, [-3]^{60},  \}$ & 1& 2& no\\ \hline
$\G(4,3^6)$  & $(729, 182, 55, 42)$ &  $\{ [182]^1, [20]^{182}, [-7]^{546} \}$ & 1& 3& $L_7(27)$ \\ 
$\bar\G(4,3^6)$  & $(729, 546, 405, 420)$ &  $\{ [546]^1, [6]^{546}, [-21]^{182} \}$ & 1& 3& $L_{21}(27)$ \\ \hline
$\G(\bm{4},\bm{7^2})$  & $(49, 12, 5, 2)$    &  $\{ [12]^1, [5]^{12}, [-2]^{36} \}$ & 1& 1& $L_2(7)$ \\ 
$\bar\G(\bm{4},\bm{7^2})$  & $(49, 36, 25, 30)$    &  $\{ [36]^1, [1]^{36}, [-6]^{12} \}$ & 1& 1& $L_6(7)$ \\ \hline
$\G(\bm{4},\bm{7^4})$  & $(2401, 600, 131, 156)$ & $\{ [600]^1, [12]^{1800}, [-37]^{600} \}$ & 1& 2& no \\ 
$\bar\G(\bm{4}, \bm{7^4})$  & $(2401, 1800, 1332, 1355)$ & $\{ [1800]^1, [36]^{600}, [-13]^{1800} \}$ & 1& 2& no \\ \hline
$\G(\bm{5},\bm{3^4})$  & $(81, 16, 7, 2)$    &  $\{ [16]^1, [7]^{16}, [-2]^{64} \}$ & 2& 1& $L_2(9)$ \\ 
$\bar\G(\bm{5},\bm{3^4})$  & $(81, 64, 49, 56)$    &  $\{ [64]^1, [1]^{64}, [-8]^{16} \}$ & 2& 1& $L_8(9)$ \\ \hline
$\G(\bm{5},\bm{7^4})$  & $(2401,480,119, 90)$ &  $\{ [480]^1, [39]^{480}, [-10]^{1920} \}$ & 2& 1& $L_{10}(49)$ \\ 
$\bar\G(\bm{5},\bm{7^4})$  & $(2401,1920,1560, 1529)$ &  $\{ [1920]^1, [9]^{1920}, [-40]^{480} \}$ & 2& 1& $L_{40}(49)$ \\ \hline
\end{tabular}
\end{table}
\end{small}
\hfill $\lozenge$
\end{exam}

\begin{rem}
($i$) Notice that if we take $h = \min\{|f|, |g|\}$, then for $s$ even (in the previous notations), $\G(k,q)$ satisfy the same parameters as in \eqref{NL} with $\delta$ replaced by $h$, that is $\G(k,q)$ is a strongly regular graph with parameters, 
in terms of the eigenvalues, given by
$$\widetilde{NL} = srg(w^2, h(w+1), h^2+3h-w, h(h+1)).$$

\noindent ($ii$) 
Since the complement of $\G(k,q)$ is a Latin square $L_\delta(w)$ and its complement $\bar \G(k,q)$ is also a Latin square 
$L_{\delta'}(w)$ with $\delta'=u+1-\delta$, by Proposition \ref{srg}, the graphs 
can be constructed from a complete set of mutually orthogonal Latin squares (MOLS). This is equivalent to the existence of an affine plane of order $w$. Since $w=f-g=p^{\frac m2}$ with $m$ even, the semiprimitive GP-graphs give a standard way to construct complete sets of MOLS of order $w$ and affine planes of order $w$, with $w$ a power of a prime.      
\end{rem}

\section{All Ramanujan semiprimitive GP-graphs} 
If $\Gamma$ is an $n$-regular graph, then $n$ is the greatest eigenvalue of $\G$. 
A connected $n$-regular graph is called \textit{Ramanujan} if 
\begin{equation} \label{ram cond}
\lambda(\G) := \max_{\lambda \in Spec(\Gamma)} \{|\lambda| : |\lambda|\ne n \} \le 2\sqrt{n-1}. 
\end{equation} 
Next, we present a complete characterization of the semiprimitive generalized Paley graphs which are Ramanujan.

\begin{thm} \label{rama car}
Let $q=p^m$ with $p$ prime and let $(k,q)$ be a semiprimitive pair.
Then, the graph $\G=\G(k,q)$ is Ramanujan if and only if

$\G$ is the classic Paley graph $\G(2,q)$, with $q \equiv 1 \pmod 4$, or
$m$ is even and $k, p, m$ are as
in one of the following cases:
\begin{enumerate}[$(a)$]
	\item $k=3$, $p=2$ and $m\ge 4$.  
	\item $k=3$, $p \ne 2$ with $p\equiv 2 \pmod 3$ and $m\ge 2$.
	\item $k=4$, $p=3$ and $m\ge 4$. 
	\item $k=4$, $p\ne 3$ with $p\equiv 3 \pmod 4$ and $m\ge 2$. 
	\item $k=5$, $p=2$ and $m\ge 8$ with $4\mid m$.
	\item $k=5$, $p\ne 2$ with $p\equiv 2,3\pmod 5$ and $m\ge 4$ with $4\mid m$.
	\item $k=5$, $p\equiv 4 \pmod 5$ and $m\ge 2$. 
\end{enumerate} 
Moreover, $\bar{\G}(k,q)$ is Ramanujan for every semiprimitive pair $(k,q)$.
\end{thm}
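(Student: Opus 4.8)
The plan is to extract the three eigenvalues of $\G=\G(k,q)$ from Theorem~\ref{semiprimitive}, turn the Ramanujan condition~\eqref{ram cond} into an elementary polynomial inequality in the single variable $P:=p^{m/2}$ (with $k$ and the sign $\sigma$ of~\eqref{signo} as parameters), and then feed in the arithmetic defining a semiprimitive pair to decide exactly when it holds. The case $k=2$ I would treat separately: there $\lambda(\G(2,q))=\tfrac{\sqrt q+1}{2}$ (by Theorem~\ref{semiprimitive}, or by Example~\ref{spec G2q} if $m=1$), and $\tfrac{\sqrt q+1}{2}\le2\sqrt{\tfrac{q-1}{2}-1}$ is equivalent, after squaring and clearing denominators, to $7q-2\sqrt q-25\ge0$, which holds for every prime power $q\ge5$; hence every Paley graph $\G(2,q)$ with $q\equiv1\pmod4$ is Ramanujan.

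Now assume $k\ge3$ and write $P=p^{m/2}$, so $n=\tfrac{P^2-1}{k}$. From~\eqref{autoval semip} one checks $|\lambda_1|\ge|\lambda_2|$, hence $\lambda(\G)=|\lambda_1|=\tfrac{(k-1)P-\sigma}{k}$. Squaring~\eqref{ram cond} and clearing denominators, the Ramanujan condition becomes $((k-1)P-\sigma)^2\le4k(P^2-1-k)$, which after expanding (and using $\sigma^2=1$) reads
\begin{equation*}
(k^2-6k+1)\,P^2-2\sigma(k-1)\,P+(2k+1)^2\le0.\tag{$\star$}
\end{equation*}
If $k\ge6$ then $k^2-6k+1\ge1$, so $(k^2-6k+1)(2k+1)^2>(k-1)^2$; thus the quadratic in $P$ on the left of~$(\star)$ has positive leading coefficient and negative discriminant, $(\star)$ never holds, and no semiprimitive $\G(k,q)$ with $k\ge6$ is Ramanujan. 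If $k\in\{3,4,5\}$ then $k^2-6k+1=-c$ with $c=6k-1-k^2\in\{8,7,4\}$, so~$(\star)$ becomes $cP^2+2\sigma(k-1)P-(2k+1)^2\ge0$; this quadratic in $P$ has one negative and one positive root $P_+(k,\sigma)$, whence~$(\star)$ is equivalent to $P\ge P_+(k,\sigma)$, and one computes $P_+(k,\sigma)<7$ in all six cases (the largest being $P_+(5,-1)=1+\tfrac{5\sqrt5}{2}<7$). Consequently, for $k\in\{3,4,5\}$ the graph $\G(k,q)$ is Ramanujan as soon as $P=p^{m/2}\ge7$.

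It remains to combine this with a description of the semiprimitive pairs. Recall that for $k>2$ the pair $(k,q)$ is semiprimitive exactly when $k\mid p^t+1$ for some $t\mid\tfrac m2$ and $k\ne p^{m/2}+1$. A routine case analysis, keyed on $p\bmod k$ and on the least $t$ with $k\mid p^t+1$ (which imposes parity conditions on $m$ — for instance $4\mid m$ when $k=5$ and $p\equiv\pm2\pmod5$), shows that the semiprimitive pairs with $3\le k\le5$ are precisely those described in $(a)$--$(g)$; in particular the restrictions $m\ge4$ in $(a),(c),(f)$ and $m\ge8$ in $(e)$ are forced by the exclusion $k\ne p^{m/2}+1$. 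Among all these pairs, the only ones with $p^{m/2}<7$ are $(3,2^4)$ and $(3,5^2)$, for which $\lambda(\G)=3\le2\sqrt{n-1}$ by inspection (their spectra being $\{[5]^1,[1]^{10},[-3]^5\}$ and $\{[8]^1,[3]^8,[-2]^{16}\}$). Together with the second paragraph, this would prove the stated characterization of Ramanujan semiprimitive $\G(k,q)$.

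Finally, for $\bar\G=\bar\G(k,q)$, Theorem~\ref{semiprimitive} gives eigenvalues $(k-1)n$, $(k-1)\lambda_2$ and $-1-\lambda_2$; substituting $\lambda_2=-\tfrac{\sigma P+1}{k}$ one finds $\bar\lambda(\bar\G)\le\tfrac{(k-1)(P+1)}{k}$ regardless of $\sigma$. Since $\bar\G$ is $(k-1)n$-regular, its Ramanujan inequality is implied by $(k-1)^2(P+1)^2\le4k(k-1)(P^2-1)-4k^2$, i.e.\ by
\begin{equation*}
(3k+1)(k-1)\,P^2-2(k-1)^2\,P-(3k-1)^2\ge0.\tag{$\star\star$}
\end{equation*}
The left side of~$(\star\star)$ equals $-(k-3)^2$ at $P=2$ and $12k^2-16>0$ at $P=3$, and its leading coefficient is positive, so~$(\star\star)$ holds for every $P\ge3$; since no semiprimitive pair has $p^{m/2}=2$ (that would force $q=4$, hence $k\mid3$ and $k=p^{m/2}+1$, which is excluded), one always has $P\ge3$, and $\bar\G(k,q)$ is Ramanujan for every semiprimitive pair. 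The one genuinely fiddly step is the bookkeeping in the third paragraph matching the semiprimitive pairs with $k\le5$ to the list $(a)$--$(g)$; everything else is elementary estimation once Theorem~\ref{semiprimitive} is in hand.
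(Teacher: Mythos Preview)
Your proof is correct and follows essentially the same route as the paper's: both reduce the Ramanujan condition, via the eigenvalues of Theorem~\ref{semiprimitive}, to a quadratic inequality in $P=p^{m/2}$, observe that the leading coefficient forces $k\le 5$, and then handle $k\in\{3,4,5\}$ and the complement by direct estimation. Your bookkeeping is in fact a bit tidier than the paper's (a single inequality~$(\star)$ and the uniform bound $P_+(k,\sigma)<7$, rather than separate analyses for $\sigma=\pm1$ and for each $k$).

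One small gap to patch: your treatment of $\bar\G(k,q)$ invokes Theorem~\ref{semiprimitive} and then argues that every semiprimitive pair has $P=p^{m/2}\ge 3$. But the definition of semiprimitive pair allows $k=2$ with $m$ odd (any $q\equiv 1\pmod 4$), and then $P=\sqrt q$ need not be an integer and can be below $3$ (e.g.\ $q=5$). The paper closes this case by remarking that Paley graphs are self-complementary, so $\bar\G(2,q)\cong\G(2,q)$ is already Ramanujan by your first paragraph; you should add that one line. (Relatedly, your aside that the $m\ge 4$ restriction in~$(f)$ comes from the exclusion $k\ne p^{m/2}+1$ is not quite right: for odd $p$ one never has $p^{m/2}=4$, and $m\ge 4$ there is simply the condition $4\mid m$.)
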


\begin{proof}
We begin by noticing that $k=1$ is excluded since $(1,q)$ is not a semiprimitive pair and that 
$k=2$ corresponds to the classic Paley graph $\G(2,q)$, with $q \equiv 1 \pmod 4$, 
which is well-known to be Ramanujan. 
The proof follows in three steps. 

So it is enough to consider semiprimitive pairs $(k,p^m)$ with $k>2$.

\noindent \textit{Step 1.} 
Here we prove that if $\G$ is Ramanujan with $(k,p^m)$ a semiprimitive pair with $k\ne 2$, then $3\le k\le 5$.

Note that $\lambda(\G)=|\lambda_1|$, for $k\ge 3$.
Since $\G$ is Ramanujan, 
\eqref{ram cond} reads
$$\tfrac 1k |\sigma(k-1)p^{\frac m2}-1| \le 2\sqrt{\tfrac{p^m-(k+1)}{k}}.$$ 
This inequality is equivalent to $(k-1)^2 p^m-2\sigma(k-1) p^{\frac m2}+1\le 4k(p^{m}-(k+1))$  which holds if and only if 
\begin{equation}\label{rammast2}
4k(k+1)+1\le p^m (4k-(k-1)^2)+ 2(k-1)\sigma p^{\frac m2}.
\end{equation}

Assume first that $\sigma=-1$. Then, \eqref{rammast2} takes the form
\begin{equation}\label{rammast}
2(k-1)p^{\frac m2}+4k(k+1)+1\le p^m (4k-(k-1)^2).
\end{equation} 
Since the left hand side of this inequality is positive, we have that $4k-(k-1)^2>0$, and this can only happen if $ k \le 5$.

Now, let $\sigma=1$. In this case, inequality \eqref{rammast2} is equivalent to 
\begin{equation}\label{rammast3}
0\le (4k-(k-1)^2) p^m+ 2(k-1)p^{\frac m2} -(2k+1)^2.
\end{equation}
Suppose $k>5$ and consider the polynomial 
$$P_{k}(x)=(4k-(k-1)^2) x^2+ 2(k-1)x -(2k+1)^2.$$ 
Hence, $P_k(x)$ has negative 
leading coefficient and its discriminant is given by 
$$\Delta(k)= 4 \big( (k-1)^2 - ((k-3)^2-8)(2k+1)^2 \big).$$
Since $(2k+1)^2> (k-1)^2$, the sign of $\Delta(k)$ depends on $(k-3)^2-8$. Since $k>5$, we have that $(k-3)^2-8>0$ and thus $\Delta(k)<0$. So, the quadratic polynomial $P_{k}$ have no real roots and since $P_{k}(0)=-(2k+1)^2<0$, we obtain that $P_{k}(x)<0$ for all $x\in \mathbb{R}$, in particular $P_{k}(p^{\frac m2})<0$ for all $p$ and $m$, contradicting \eqref{rammast3}. 
Therefore, if $\G$ is Ramanujan then $k\le 5$, as we wanted to show.

\noindent \textit{Step 2.} 
We now show that the pair $(k,q)$ semiprimitive with $k\le 5$ can only happen as stated in the theorem; and, in these cases, $\G(k,q)$ is Ramanujan.

As mentioned at the beginning, the case $k=1$ is excluded and $k=2$ corresponds to the classic Paley graph, which is Ramanujan.
If $k=3$, then necessarily $p\equiv 2 \pmod 3$ and $m$ is even, for if not the pair $(k,p^m)$ is not semiprimitive. 
In this case, \eqref{rammast2} is given by
$49\le 8 p^m+4\sigma p^{\frac m2}$.
The worst possibility is when $\sigma =-1$, and in this case the previous inequality reads 
$$12+\tfrac{1}{4}\le p^{\frac m2}(2p^{\frac m2}-1).$$
This clearly holds if and only if $p$ is odd and $m\ge 2$ or $p=2$ and $m\ge 4$, and thus $\G(3,p^m)$ is Ramanujan in 
these cases.

If $k=4$, then we must have $p\equiv 3 \pmod 4$ and $m$ is even, for if not the pair $(k,p^m)$ is not semiprimitive. 
In this case, \eqref{rammast2} is given by 
$ 81\le 7p^m+6\sigma p^{\frac m2}$. 
As before, the worst case is when $\sigma =-1$, and thus the inequality is equivalent to 
$$11+ \tfrac{4}{7}\le p^{\frac m2}(p^{\frac m2}-\tfrac{6}{7}) .$$
This holds if and only if $p>3$ and $m\ge 2$ or $p=3$ and $m\ge 4$ and hence $\G(4,p^m)$ is Ramanujan in these cases.

In the last case, if $k=5$ the pair $(5,p^m)$ is semiprimitive if and only if $p\equiv 2,3 \pmod 5$ and $4\mid m$ or 
else $p\equiv 4 \pmod 5$ and $m\ge 2$ even. 
On the other hand, in this case \eqref{rammast2} is given by
$121 \le 4p^m+8\sigma p^{\frac m2}$,
which is equivalent to 
$$30+\tfrac{1}{4}\le p^{\frac m2}(p^{\frac m2}+2\sigma).$$
If $p=2$, then neccesarily $m\ge 8$ since $4\mid  m$ and $m=4$ does not satisfy the above inequality.
Clearly, the inequality holds for $p\equiv 2,3 \pmod 5$ with $p\ne 2$ and $4\mid m$.
Finally, notice that the right hand side of the inequality increases when $p$ increases. 
The first prime $p$ with $p\equiv 4 \pmod 5$ is $p=19$ that crearly satisfies the inequality for $m\ge 2$ even, 
so we obtain that the inequality holds for all of primes $p\equiv 4 \pmod 5$ with $m\ge 2$ even.
In this way we have shown that $\G(5,p^m)$ is Ramanujan in all the cases in the statement.

\noindent \textit{Step 3.} 
Finally, we consider the complementary graphs $\bar{\Gamma}=\bar{\G}(k,q)$. We have that 
$$\lambda(\bar{\G}) = |\bar{\lambda}_1|=(k-1) \tfrac{p^{\frac m2}+\sigma}{k}$$ 
and the regularity degree of $\bar{\G}$ is $n(k-1)$. 
Notice that we can assume that $k>2$, since $k=2$ correspond to the classic Paley graph which is complementary, and hence Ramanujan. 
Also, without loss of generality we can assume that $\sigma=1$. 
Inequality \eqref{ram cond} becomes 
\begin{equation} \label{ram comp}
(k-1)\tfrac{p^{\frac m2}+1}{k}\le 2\sqrt{\tfrac{(p^m-1)(k-1)-k}{k}}
\end{equation}
which is equivalent to $(k-1)^{2}p^m+2(k-1)^2p^{\frac m2}+(k-1)^2 \le 4k(p^m(k-1)-(2k-1))$ and therefore we have   
$$2(k-1)^2p^{\frac m2}+(k-1)^2+4k(2k-1)\le p^{m}(4k(k-1)-(k-1)^2).$$
Notice that $(k-1)^2+4k(2k-1)=(3k-1)^2$ and $4k(k-1)-(k-1)^2=(k-1)(3k+1)$. 
Let us consider the quadratic polynomial $Q_{k}(x)=x^2-b_{k} x-c_k$, where
$$b_{k}=\tfrac{2(k-1)}{(3k+1)} \quad \text{and } \quad c_{k}=\tfrac{(3k-1)^2}{(k-1)(3k+1)}.$$
Hence, $\bar \G(k,q)$ is Ramanujan if and only \eqref{ram comp} holds, that is if and only if $Q_k(p^{\frac m2})>0$.

Clearly $b_{k}<1$ and $4c_{k}<15$, this implies that the greatest real root $r$ of $Q_k$ satisfies $$r=\tfrac{b_{k}}{2}+\tfrac{1}{2}\sqrt{b_k^2+4c_k}<\tfrac 12 +2<3.$$
Since $(k,p^m)$ is a semiprimitive pair and $k>2$, we have that $p^{\frac m2}\ge 3$. This implies that $Q_{k}(p^{\frac m2})>0$ since $Q_{k}$
has a positive leading coefficient. Therefore $\bar \G(k,p^m)$ is Ramanujan for all semiprimitive par $(k,p^m)$ with $\sigma=1.$
The case $\sigma=-1$ can be proved analogously.
\end{proof}

The previous result gives the following eight infinite families of Ramanujan 
semiprimitive GP-graphs:
\begin{enumerate}[$(a)$]
\item \quad $\{ \G(2,q)\}$ with 
 $q\equiv 1 \pmod 4$ (Paley graphs),
\item \quad $\{ \G(3,4^{t})\}_{t\ge 2}$, 
\item \quad $\{ \G(3,p^{2t})\}_{t\ge 1}$ with $p \equiv 2 \pmod 3$ and $p\ne 2$, 
\item \quad $\{ \G(4,9^{t})\}_{t\ge 2}$,
\item \quad $\{ \G(4,p^{2t})\}_{t\ge 1}$ with $p \equiv 3 \pmod 4$ and $p\ne 3$, 
\item \quad $\{ \G(5,16^{t})\}_{t\ge 2}$,
\item \quad $\{ \G(5,p^{4t})\}_{t\ge 1}$, with $p \equiv 2,3 \pmod 5$ and $p\ne 2$,
\item \quad $\{ \G(5,p^{2t})\}_{t\ge 1}$ with $p \equiv 4 \pmod 5$.
\end{enumerate}
Notice that five of them are valid for an infinite number of primes.

\begin{rem}
($i$) 
The Ramanujan GP-graphs $\G(k,q)$ with $k=p^\ell+1$ are characterized in Theorem~8.1 in \cite{PV1}. 
There, we proved that $\Gamma_{q,m}(\ell) = \G(p^{\ell}+1,p^m)$, with $\ell \mid m$ such that $m_\ell$ even 
and $\ell \ne \frac m2$, is Ramanujan if and only if $q=2,3,4$ with $\ell=1$ and $m\ge 4$ even. 
This says that $\G(p^{\ell}+1,p^m)$ is Ramanujan only in the cases ($b$), ($d$) and ($f$),  
giving the infinite families  
$$\{ \G(3, 4^t) \}_{t\ge 2}, \qquad \{ \G(4, 9^t)\}_{t\ge 2} \qquad \text{ and } \qquad \{\G(5, 16^t)\}_{t\ge 2}$$
of Ramanujan graphs. 
The first two families coincide with those in ($b$) and ($d$), while the third one gives just half the graphs in ($f$), precisely those with $t$ even in ($f$). 
Thus, the last proposition extends this characterization of Ramanujan GP-graphs $\G(q^\ell+1,q^m)$ to all semiprimitive pairs $(k,p^m)$,  that is in the case $q=p$. 

($ii$) 
When $p=2$, the last proposition does not give any novelty, since the possible values of $k \in \{2,3,4,5\}$ such that $(k,2^m)$ is a semiprimitive pair reduces to $k=3, 5$,  
which corresponds to the cases $p=2$ with $\ell=1, 2$ in ($i$) above.
\end{rem}

\begin{exam}
From Proposition \ref{rama car}, the following GP-graphs are Ramanujan:
$$\begin{tabular}{|p{1cm}|p{9cm}|}
\hline 
$p=2$ & $\G(3,16)$, $\G(3,64)$, $\G(3,256)$, $\G(5, 256)$, \\ \hline 
$p=3$ & $\G(2,81)$, $\G(4,81)$, $\G(\textbf{5}$, $\textbf{81})$, 
 $\G(2,729)$, $\G(4,729)$, $\G(2,6{.}561)$, $\G(4,6{.}561)$, $\G(\textbf{5}, \textbf{6{.}561})$ \\ \hline
$p=5$ & $\G(2,25)$, $\G(\textbf{3},\textbf{25})$, $\G(2,625)$, $\G(\textbf{3},\textbf{625})$, 
 $\G(2,15{.}625)$, $\G(\textbf{3},\textbf{15{.}625})$, $\G(2,390{.}625)$, $\G(\textbf{3},\textbf{390{.}625})$ \\ \hline  
\end{tabular}$$
where $6{.}561=3^8$, $15{.}625=5^6$ and $390{.}625=5^8$. In bold those graphs $\G(k,p^m)$ with $k\ne p^\ell+1$ for some $\ell \mid \frac m2$.
\hfill $\lozenge$
\end{exam}

\section{The relation between the spectra of $\Gamma(k,q)$ and $\CC(k,q)$}
In this section we relate the spectrum of the graph $\Gamma(k,q)$ given in \eqref{Gammas} with the weight distribution of the code 
$\CC(k,q)$ defined in \eqref{Ckqs}. We will see that $Spec(\CC(k,q))$ determines $Spec(\Gamma(k,q))$ and conversely.
To this end we will have to assume further that $k \mid \frac{q-1}{p-1}$. Under this hypothesis, the numbers in \eqref{N and n} become $N=k$ and $n=\frac{q-1}{k}$.

\begin{thm} \label{pesoaut}
Let $q=p^m$ with $p$ prime, $k \in \N$ such that $k \mid \frac{q-1}{p-1}$ and put $n=\tfrac{q-1}{k}$.
Let $\G(k,q)$ and $\CC(k,q)$ be as in \eqref{Gammas} and \eqref{Ckqs}. 
Then, we have: 
\begin{enumerate}[$(a)$]
\item The eigenvalue $\lambda_{\gamma}$ of $\G(k,q)$ and the weight of $c_{\gamma} \in \CC(k,q)$ are related by the expression
\begin{equation} \label{Pesoaut}
\lambda_{\gamma} = n - \tfrac{p}{p-1} w(c_{\gamma}). 
\end{equation}

\item If $\G(k,q)$ is connected the multiplicity of $\lambda_{\gamma}$ is $A_{w(c_{\gamma})}$ for all $\gamma\in \ff_q$. In particular, 
the multiplicity of $\lambda_0 = n$ is $A_0=1$. 
\end{enumerate}
\end{thm}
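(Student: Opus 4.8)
The plan is to compute both the eigenvalue $\lambda_\gamma$ and the weight $w(c_\gamma)$ explicitly as exponential sums over $\ff_q$ and then eliminate the common sum. From the proof of Theorem~\ref{Spectro Gkq} we already have, for each $\gamma \in \ff_q$,
\begin{equation*}
\lambda_\gamma = \sum_{y \in R_k} \zeta_p^{\Tr_{q/p}(\gamma y)} = \sum_{i=0}^{n-1} \zeta_p^{\Tr_{q/p}(\gamma \omega^{ki})},
\end{equation*}
since $R_k = \{\omega^{ki} : 0 \le i \le n-1\}$ has exactly $n = \frac{q-1}{k}$ elements (here using $k \mid q-1$, which follows from $k \mid \frac{q-1}{p-1}$). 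On the code side, the weight of $c_\gamma = (\Tr_{q/p}(\gamma\omega^{ki}))_{i=0}^{n-1}$ is $n$ minus the number of indices $i$ with $\Tr_{q/p}(\gamma\omega^{ki}) = 0$. The standard trick is to detect the vanishing of a trace by the inner sum $\frac{1}{p}\sum_{a \in \ff_p} \zeta_p^{a\,\Tr_{q/p}(\gamma\omega^{ki})} = \frac{1}{p}\sum_{a \in \ff_p}\zeta_p^{\Tr_{q/p}(a\gamma\omega^{ki})}$, which is $1$ if the trace is zero and $0$ otherwise.

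First I would carry out the weight computation: writing $Z(c_\gamma)$ for the number of zero coordinates of $c_\gamma$, we get
\begin{equation*}
Z(c_\gamma) = \frac{1}{p}\sum_{i=0}^{n-1}\sum_{a \in \ff_p} \zeta_p^{\Tr_{q/p}(a\gamma\omega^{ki})} = \frac{n}{p} + \frac{1}{p}\sum_{a \in \ff_p^*}\sum_{i=0}^{n-1}\zeta_p^{\Tr_{q/p}(a\gamma\omega^{ki})} = \frac{n}{p} + \frac{1}{p}\sum_{a\in\ff_p^*}\lambda_{a\gamma},
\end{equation*}
separating the $a=0$ term. The key point now is that $\lambda_{a\gamma} = \lambda_\gamma$ for every $a \in \ff_p^*$: this is exactly where the hypothesis $k \mid \frac{q-1}{p-1}$ enters, because it guarantees $\ff_p^* \subseteq R_k$ (indeed $\ff_p^* = \langle \omega^{(q-1)/(p-1)}\rangle$ and $(q-1)/(p-1)$ is a multiple of $k$), so multiplying $\gamma$ by $a \in \ff_p^*$ permutes the coset $\gamma R_k$ and hence leaves the sum $\sum_{y\in R_k}\zeta_p^{\Tr_{q/p}(\gamma y)}$ unchanged — equivalently, $\gamma$ and $a\gamma$ lie in the same coset $C_i^{(k,q)}$, so by \eqref{AutTq} they yield the same Gaussian period. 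Therefore $Z(c_\gamma) = \frac{n}{p} + \frac{p-1}{p}\lambda_\gamma$, and since $w(c_\gamma) = n - Z(c_\gamma)$ we obtain $w(c_\gamma) = n - \frac{n}{p} - \frac{p-1}{p}\lambda_\gamma = \frac{p-1}{p}(n - \lambda_\gamma)$, which rearranges to \eqref{Pesoaut}. I expect this step — justifying $\lambda_{a\gamma} = \lambda_\gamma$ from the divisibility hypothesis — to be the one subtle point; everything else is routine manipulation of character sums.

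For part $(b)$, the relation \eqref{Pesoaut} shows that $\gamma \mapsto \lambda_\gamma$ and $\gamma \mapsto w(c_\gamma)$ factor through each other: $\lambda_\gamma = \lambda_{\gamma'}$ if and only if $w(c_\gamma) = w(c_{\gamma'})$, since \eqref{Pesoaut} is an affine bijection between the eigenvalue and the weight. Now when $\G(k,q)$ is connected, by part $(a)$ of Theorem~\ref{Spectro Gkq} we have $\mu = 0$, which means no non-principal character produces the eigenvalue $n$; consequently the eigenvalue $\lambda_\gamma$ of the adjacency matrix $A$ — whose spectrum, as computed in Theorem~\ref{Spectro Gkq}, assigns multiplicity to each value equal to the number of $\gamma$'s (via the characters $\chi_\gamma$) realizing it — has multiplicity exactly $\#\{\gamma' \in \ff_q : \lambda_{\gamma'} = \lambda_\gamma\} = \#\{\gamma' : w(c_{\gamma'}) = w(c_\gamma)\} = A_{w(c_\gamma)}$, using that the map $\gamma \mapsto c_\gamma$ is a bijection $\ff_q \to \CC(k,q)$ when the code has full dimension $m$ (which holds precisely in the connected case). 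Finally, for $\gamma = 0$ we have $c_0 = \mathbf{0}$, so $w(c_0) = 0$ and $A_0 = \#\{c \in \CC(k,q) : w(c) = 0\} = 1$, matching $\lambda_0 = n$ with multiplicity $1$ from connectedness. The plan is thus to extract $(b)$ as a bookkeeping corollary of $(a)$ together with the $\mu = 0$ clause of Theorem~\ref{Spectro Gkq}, the only thing worth spelling out being why connectedness forces $\dim \CC(k,q) = m$ so that $|\CC(k,q)| = q$.
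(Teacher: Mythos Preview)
Your proof is correct. For part $(a)$ you take a somewhat different route from the paper: the paper simply quotes the weight formula $w(c_\gamma)=\tfrac{p-1}{pk}(q-1-k\eta_i^{(k,q)})$ from \cite{DY} and combines it with the identification $\lambda_\gamma=\eta_i^{(k,q)}$ already established in Theorem~\ref{Spectro Gkq}, whereas you derive that weight formula from scratch via the character-sum computation of $Z(c_\gamma)$. Your argument is thus more self-contained, and the step you single out as subtle --- that $k\mid\tfrac{q-1}{p-1}$ forces $\ff_p^*\subseteq R_k$ and hence $\lambda_{a\gamma}=\lambda_\gamma$ for $a\in\ff_p^*$ --- is precisely the content hidden inside the cited formula from \cite{DY}. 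Both approaches amount to the same computation; yours just unpacks it.

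For part $(b)$ your reasoning matches the paper's, but you are more explicit: the paper declares the equality of multiplicities ``clear'', while you correctly point out that one needs the map $\gamma\mapsto c_\gamma$ to be a bijection $\ff_q\to\CC(k,q)$, and that this holds exactly when the $\ff_p$-span of $R_k$ is all of $\ff_q$, i.e.\ when $n$ is a primitive divisor of $q-1$, which is the connectedness hypothesis. That observation is worth keeping.
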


\begin{proof}
$(a)$ Let $c_{\gamma} \in \CC(k,q)$. Thus, $w(c_0)=w(0)=0$, by \eqref{Ckqs}. Also, if $\gamma \in C_{i}^{(k,q)}$ 
(see \eqref{gaussian}), 
from equation (12) in \cite{DY} we have
\begin{equation} \label{weight c}
w(c_{\gamma}) = \tfrac{p-1}{pk} \big(q - 1 - k  \eta_{i}^{(k,q)} \big).
\end{equation}

By Proposition \ref{Spectro Gkq}, the eigenvalues of $\G(k,q)$ are $\lambda=n$ or 
$\lambda_{\gamma} = \eta_{i}^{(k,q)}$ if $\gamma \in C_i^{(k,q)}$.
Putting this in \eqref{weight c} we get \eqref{Pesoaut} for $\gamma \ne 0$. But \eqref{Pesoaut} also holds for $\lambda=n$ and $w=0$, as we wanted.

$(b)$ The assertion about the multiplicities of the eigenvalues $\lambda_\gamma$ with $\gamma \ne 0$ is clear. 
For $\gamma=0$, we have $c_\gamma=0$, so $\lambda_0=n$. 
Every $n$-regular graph $\G$ has $n$ as one of its eigenvalues, with multiplicity equal to the number of connected components of the graph. Therefore, since $\Gamma$ is connected, the multiplicity of $\lambda_0$ equals $A_{w(0)}=A_0=1$. 
\end{proof}

Note that from \eqref{Pesoaut}, $\G(k,q)$ is integral if and only if $\CC(k,q)$ is $(p-1)$-divisible.
Actually, it is known that if $k \mid \frac{q-1}{p-1}$ then $\CC(k,q)$ is a $(p-1)$-divisible code (\cite{SW}) which is in accordance with part ($b$) of Theorem \ref{Spectro Gkq}.

\begin{rem}
Combining Theorem \ref{pesoaut} with Theorem 24 in \cite{DY}, which gives the spectrum of $\CC(k,q)$ in the semiprimitive case, we obtain 
Theorem~\ref{semiprimitive} in another way.
\end{rem}

\begin{coro} \label{Conseq}
Let $q=p^m$ with $p$ prime, $k \in \N$ such that $k \mid \frac{q-1}{p-1}$ and $n=\tfrac{q-1}{k}$ is a primitive divisor of $q-1$.
Then, $\Gamma(k,q)$ is a strongly regular graph if and only if $\CC(k,q)$ is a 2-weight code.	
\end{coro}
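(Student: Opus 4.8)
The plan is to derive this corollary directly from Theorem~\ref{pesoaut}, which established the linear correspondence $\lambda_\gamma = n - \tfrac{p}{p-1} w(c_\gamma)$ between the eigenvalues of $\G(k,q)$ and the weights of $\CC(k,q)$, together with the matching of multiplicities: if $\G(k,q)$ is connected, the multiplicity of $\lambda_\gamma$ equals the frequency $A_{w(c_\gamma)}$. The first step is to observe that the hypothesis that $n = \tfrac{q-1}{k}$ is a primitive divisor of $q-1$ guarantees, by the connectivity criterion recalled after \eqref{Gammas} (equivalently by part~($a$) of Theorem~\ref{Spectro Gkq}, since $\mu = 0$ in that case), that $\G(k,q)$ is connected. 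Hence Theorem~\ref{pesoaut} applies in full, and in particular the multiplicity bookkeeping is available.

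Next I would count distinct eigenvalues versus distinct nonzero weights. By Theorem~\ref{pesoaut}$(a)$, the map $w \mapsto n - \tfrac{p}{p-1} w$ is an affine bijection from the set of weights occurring in $\CC(k,q)$ onto the set of eigenvalues of $\G(k,q)$; it sends $w=0$ (the zero codeword) to $\lambda = n$ (the regularity eigenvalue). Therefore $\G(k,q)$ has exactly $t+1$ distinct eigenvalues if and only if $\CC(k,q)$ has exactly $t$ distinct \emph{nonzero} weights, i.e. is a $t$-weight code (recall the zero codeword always contributes the weight $0$). In particular, $\G(k,q)$ has exactly three distinct eigenvalues precisely when $\CC(k,q)$ is a $2$-weight code.

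Finally I would invoke the standard spectral characterization of strongly regular graphs: a connected regular graph is strongly regular if and only if it has exactly three distinct eigenvalues (this is used already in Example~\ref{spec G2q} and in the proof of Proposition~\ref{srg}). Since $\G(k,q)$ is connected and $n$-regular under our hypothesis, it is strongly regular iff it has exactly three distinct eigenvalues, which by the previous paragraph holds iff $\CC(k,q)$ is a $2$-weight code. This completes the argument.

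I do not expect any serious obstacle here: the corollary is essentially an immediate repackaging of Theorem~\ref{pesoaut} via the ``three eigenvalues $\Leftrightarrow$ srg'' dictionary. The only point that requires a moment of care is ensuring connectivity so that the multiplicity statement in Theorem~\ref{pesoaut}$(b)$ holds and so that the ``three eigenvalues $\Rightarrow$ srg'' direction is legitimate (it can fail for disconnected graphs); this is exactly what the primitive-divisor hypothesis on $n$ provides. One should also note that the degenerate possibility of fewer than three distinct eigenvalues (which would force $\CC(k,q)$ to have at most one nonzero weight and $\G(k,q)$ to be complete) is excluded or harmless under the stated hypotheses, but in any case falls outside the ``$2$-weight'' / ``strongly regular'' equivalence being asserted.
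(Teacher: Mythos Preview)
Your proposal is correct and follows essentially the same approach as the paper: the paper's proof simply invokes the well-known fact that a simple connected regular graph is strongly regular if and only if it has exactly three distinct eigenvalues, observes that $\G(k,q)$ is simple and connected under the stated hypotheses, and then appeals to part~$(a)$ of Theorem~\ref{pesoaut}. Your write-up is more detailed (explicitly spelling out the affine bijection between weights and eigenvalues and the role of the primitive-divisor hypothesis for connectivity), but the underlying argument is identical.
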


\begin{proof}
We use the well-known fact that a simple connected graph is strongly regular if and only if it has $3$ different eigenvalues. Since the graph $\Gamma(k,q)$ is simple and connected by hypothesis (see the comments after \eqref{Gammas}), 
the remaining assertion follows by $(a)$ in Theorem~\ref{Pesoaut}.  	
\end{proof}

\subsubsection*{A lower bound for the minimum distance of $\CC(k,q)$}
Note that if $\Gamma=\Gamma(k,q)$ with $k$ and $q$ satisfying $k\mid \frac{q-1}{p-1}$ and $n=\frac{q-1}k$ is a primitive divisor of $q-1$ then, by \eqref{Pesoaut}, we have (see \eqref{ram cond})
\begin{equation} \label{lambdamax}
\lambda(\Gamma) = \max \big\{ n-(\tfrac{p}{p-1})d, \, (\tfrac{p}{p-1}) d' - n \big\}
\end{equation} 
where $d = w_{min}$ is the minimum distance 
and $d'=w_{max}$ is the maximum weight of $\CC(k,q)$.

We now show that if $\Gamma(k,q)$ is Ramanujan, under the non-restrictive additional assumption $d \le (\frac{p-1}{p}) n$ 
we get a lower bound for the minimum distance of the associated code $\mathcal{C}(k,q)$.

\begin{coro} \label{Ram} 
In the previous notations and under the same hypothesis of Theorem \ref{pesoaut}, if 
$\G(k,q)$ is Ramanujan and the minimum distance of $\CC(k,q)$ satisfies $d \le (\tfrac{p-1}{p})n$ then 
\begin{equation} \label{rama} 
 d \ge (\tfrac{p-1}{p})(n-2\sqrt{n-1}). 
\end{equation}
More precisely, we have:
\begin{enumerate}[(a)]
\item If $\lambda(\G)=n-(\tfrac{p}{p-1})d$, then $\G(k,q)$ is Ramanujan if and only if \eqref{rama} holds. 

\item Suppose $\lambda(\G) = (\tfrac{p}{p-1})d'-n$ and $d \le (\tfrac{p-1}{p})n$. If $\G(k,q)$ is Ramanujan then \eqref{rama} holds. 
\end{enumerate} 
\end{coro}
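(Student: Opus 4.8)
The plan is to derive everything from the key identity \eqref{Pesoaut} in Theorem~\ref{pesoaut}, namely $\lambda_\gamma = n - \tfrac{p}{p-1}w(c_\gamma)$, together with the explicit description of $\lambda(\G)$ in \eqref{lambdamax}. First I would record that, since $\G(k,q)$ is connected (which holds because $n$ is a primitive divisor of $q-1$) and $n$-regular, the Ramanujan condition \eqref{ram cond} is $\lambda(\G)\le 2\sqrt{n-1}$, where by \eqref{lambdamax} we have $\lambda(\G)=\max\{n-(\tfrac{p}{p-1})d,\ (\tfrac{p}{p-1})d'-n\}$ with $d=w_{\min}$ and $d'=w_{\max}$.

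Next I would prove the main inequality \eqref{rama}. Under the hypothesis $d\le(\tfrac{p-1}{p})n$ the quantity $n-(\tfrac{p}{p-1})d$ is nonnegative, so it equals $|n-(\tfrac{p}{p-1})d|$ and is therefore $\le\lambda(\G)$. If $\G(k,q)$ is Ramanujan, then $\lambda(\G)\le 2\sqrt{n-1}$, hence $n-(\tfrac{p}{p-1})d\le 2\sqrt{n-1}$, which rearranges to $(\tfrac{p}{p-1})d\ge n-2\sqrt{n-1}$, i.e. $d\ge(\tfrac{p-1}{p})(n-2\sqrt{n-1})$. This is exactly \eqref{rama}.

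For the refined statements: in case $(a)$, the hypothesis $\lambda(\G)=n-(\tfrac{p}{p-1})d$ means the Ramanujan condition reads precisely $n-(\tfrac{p}{p-1})d\le 2\sqrt{n-1}$, and the chain of rearrangements above is reversible, so $\G(k,q)$ is Ramanujan if and only if \eqref{rama} holds. In case $(b)$, the hypothesis is $\lambda(\G)=(\tfrac{p}{p-1})d'-n$; if $\G(k,q)$ is Ramanujan then $(\tfrac{p}{p-1})d'-n\le 2\sqrt{n-1}$, but we still have $n-(\tfrac{p}{p-1})d\le\lambda(\G)\le 2\sqrt{n-1}$ (using $d\le(\tfrac{p-1}{p})n$ to ensure the left side is the absolute value, and that $n-(\tfrac{p}{p-1})d$ is at most the maximum defining $\lambda(\G)$), so \eqref{rama} follows again — only the forward implication survives because the binding constraint may come from $d'$ rather than $d$.

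The one subtlety worth being careful about, rather than a genuine obstacle, is the handling of the $\max$ in \eqref{lambdamax}: one must check that $n-(\tfrac{p}{p-1})d$ is a lower bound for $\lambda(\G)$ in all relevant cases (which is immediate since it is one of the two arguments of the $\max$, provided it is nonnegative, guaranteed by $d\le(\tfrac{p-1}{p})n$), and observe that the reverse implication only works when that term actually attains the maximum, which is precisely the content of splitting into cases $(a)$ and $(b)$. Everything else is elementary algebraic rearrangement.
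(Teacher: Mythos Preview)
Your proposal is correct and follows essentially the same approach as the paper: the paper's proof also derives (a) directly from the Ramanujan inequality \eqref{ram cond}, proves (b) via the chain $0 \le n-(\tfrac{p}{p-1})d = |n-(\tfrac{p}{p-1})d| \le \lambda(\G)\le 2\sqrt{n-1}$, and obtains the main assertion as a consequence of (a) and (b). Your write-up is slightly more detailed (you prove the main inequality first rather than deducing it from (a) and (b)), but the logic is identical.
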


\begin{proof}
The first assertion is a consequence of ($a$) and ($b$) in the statement. 
If $\lambda(\G)=n-(\tfrac{p}{p-1})d$, the equivalence in ($a$) follows immediately from \eqref{ram cond}. 
To show ($b$), by hypothesis we have 
$$ 0 \le n-(\tfrac{p}{p-1})d = |n-(\tfrac{p}{p-1})d| \le \lambda(\G)\le 2\sqrt{n-1}$$
since $\G$ is Ramanujan, and the proof is thus complete.
\end{proof}

\begin{exam}
Fix $m$ and suppose we take $q=p^m$ with $p$ prime and $k=p^\ell+1$ with $1 \le \ell \le m$. Consider the graph and the code
$$\Gamma_{p,m}(\ell) = \Gamma(p^\ell+1,p^m), \qquad
\CC_{p,m}(\ell) = \CC(p^\ell+1,p^m),$$
as in \eqref{Gammas} and \eqref{Ckqs}, respectively. 
Both $\Gamma_{p,m}(\ell)$ and $\CC_{p,m}(\ell)$, and their corresponding spectra, were respectively studied in \cite{PV1} and \cite{PV2}. 
In the definition of $\CC_{p,m}(\ell)$ we need to assume that $p^\ell+1 \mid p^m-1$. 
This is equivalent to the conditions $\ell \mid m$ and $\frac{m}{\ell}$ even. But this is just the condition for $\Gamma_{p,m}(\ell)$ to be a truly generalized Paley graph, i.e.\@ not a Paley graph (see \cite{PV1}, Proposition~2.5).
Also, take $\ell \ne \tfrac m2$ since for $\ell=\tfrac m2$ the graph is not connected.

Taking $q=p$ in Theorem 4.1 and Table 5 in \cite{PV2} we know that $\CC_\ell$ is a 2-weight code with weights and frequencies given by 
\begin{equation} 
\begin{aligned} \label{pesos}
w_1 &=  \tfrac{(p-1)(p^{m-1} + \varepsilon_\ell \, p^{\frac m2 +\ell-1})}{p^\ell+1} \,,  \qquad &  A_{w_1} & = n,  \\[1mm]
w_2 &=  \tfrac{(p-1)(p^{m-1} - \varepsilon_\ell \, p^{\frac m2 -1})}{p^\ell+1} \,,       \qquad &  A_{w_2} & = n p^\ell,
\end{aligned}
\end{equation}
where $n=\frac{p^m-1}{p^\ell+1}$ and $\varepsilon_\ell =(-1)^{\frac{m}{2\ell}}$. 
On the other hand, for $\ell \mid m$ with $\frac m\ell$ even, 
we have that $\G_\ell$ has three eigenvalues as given in \eqref{spec pl+1} 
--see also Theorem~3.5 in \cite{PV1} with $q=p$--.
From \eqref{spec pl+1} and \eqref{pesos}, 
it is easy to check that both $(a)$ and $(b)$ in Theorem~\ref{pesoaut} hold. In particular, we have 
$$w_1 = (k_\ell-\mu_\ell)\tfrac{(p-1)}{p} \qquad \text{and} \qquad 
w_2 = (k_\ell - \nu_\ell)\tfrac{(p-1)}{p}.$$ 
Note that this implies that $k_\ell \equiv \nu_\ell \equiv \mu_\ell \pmod p$ since $w_1, w_2$ are integers, a fact that we do not know from \cite{PV1}.

From Theorem 8.1 in \cite{PV1}, the graphs $\Gamma_{q,m}(\ell)$ are Ramanujan if and only if $\ell=1$, $q \in  \{2,3,4\}$ and $m=2t$ with $t\ge 2$. Thus, by Remark 2.10 in \cite{PV1}, we have that $\Gamma_{2,2t}(1)$ and $\Gamma_{2,4t}(2)$ are Ramanujan graphs for 
every $t\ge 2$.
From Theorem 4.1 in \cite{PV2}, the parameters of $\CC_{p,m}(1)$ and $\CC_{p,m}(2)$ are 
$$n = \frac{2^{2jt}-1}{2^j+1}, \qquad k=m, \qquad 
d= \begin{cases} 
2^{jt-1} \frac{2^{jt}-1}{2^j+1} & \quad \text{if $t$ is even}, \\[2mm] 
2^{jt-1} \frac{2^{jt}-2j}{2^j+1} & \quad \text{if $t$ is odd}, 
\end{cases}$$ 
for $j=1,2$. From here one can check that \eqref{rama} in Theorem \ref{Ram} holds. 
\hfill $\lozenge$
\end{exam}

\begin{rem}\label{semip weight}
Using Theorem \ref{pesoaut}, one can recover the weights of the irreducible semiprimitive codes (given in \cite{SW}) from the spectrum  
--obtained in Proposition \ref{semiprimitive}-- of the generalized Paley graphs $\G(k,q)$ where $(k,q)$ is a semiprimitive pair 
(i.e.\@ $k \mid p^{t}+1$ for some $t \ne \frac m2$ such that $t \mid m$ and $m_{t} = \frac{m}{t}$ even). 
If $t$ is minimal satisfying these conditions we have that   
$\sigma = (-1)^{\frac 12 m_{t}+1}$ (see \eqref{signo}). 
In fact, the nonzero weights of $\CC(k,q)$ are  
\begin{equation}\label{twoweight}
	w_1 = \tfrac{(p-1)p^{\frac m2 -1}}{m}(p^{\frac m2} - \sigma (k-1)) \qquad \text{and} \qquad 
	w_2 = \tfrac{(p-1)p^{\frac m2 -1}}{k}(p^{\frac m2} + \sigma)
	\end{equation}   
with multiplicities $n$ and $(k-1)n$, respectively.
Notice that $\sigma = -\varepsilon_{t}$, where $t=\ell$ and $\varepsilon_\ell$ as in the previous example. 
\end{rem}

\section{The spectrum of the graphs $\G(3,q)$ and $\G(4,q)$} 
We have given the spectrum of $\G(k,q)$ for $(k,q)$ a semiprimitive pair. Without this assumption, we know the spectrum in the cases $k=1, 2$
since $\G(1,q)=K_q$ and $\G(2,q^2)=P(q^2)$ are the complete and the classical Paley graphs. 
Now, using the main result of the previous section and the known spectra of irreducible cyclic codes $\CC(3,q)$ and $\CC(4,q)$, we compute the spectra of the associated GP-graphs $\G(3,q)$ and $\G(4,q)$, under certain conditions on $q$.
These graphs are known to be one of the first explicit examples of graphs satisfying $P(m,n,k)$ adjacency conditions (see \cite{An}).

\begin{thm} \label{gp3q}
Let $q=p^{m}$ with $p$ prime such that $3\mid \frac{q-1}{p-1}$ and $q\ge 5$. 
Thus, the graph $\G(3,q)$ is connected with integral spectrum given as follows:
\begin{enumerate}[$(a)$]
\item If $p\equiv 1 \pmod 3$ then $m=3t$ for some $t\in \N$ and 
$$Spec(\G(3,q)) = \big\{ [n]^1, \big[\tfrac{a\sqrt[3]{q}-1}{3}\big]^n, \big[\tfrac{-\frac{1}{2} (a+9b)\sqrt[3]{q}-1}{3}\big]^n, 
\big[\tfrac{-\frac 12 (a-9b) \sqrt[3]{q}-1}{3}\big]^n \big\} $$
where $a,b$ are integers uniquely determined by 
$$ 4\sqrt[3]{q}=a^2+27b^2, \quad a\equiv 1 \pmod 3 \quad \text{and} \quad (a,p)=1.$$

\item If $p\equiv 2 \pmod 3$ then $m=2t$ for some $t\in \N$ and 
$$Spec(\G(3,q)) = \begin{cases} 
\big\{ [n]^1, \big[\tfrac{\sqrt{q}-1}{3}\big]^{2n}, \big[\tfrac{-2\sqrt{q}-1}{3}\big]^n \big\} & 
\qquad \text{for $m\equiv 0 \pmod 4$}, \\[2mm]
\big\{ [n]^1, \big[\tfrac{2\sqrt{q}-1}{3}\big]^{n}, \big[\tfrac{-\sqrt{q}-1}{3}\big]^{2n} \big\} & 
\qquad \text{for $m\equiv 2 \pmod 4$}.
\end{cases}$$
In particular, $\G(3,q)$ is a strongly regular graph. 
\end{enumerate}
\end{thm}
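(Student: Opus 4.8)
The plan is to read off the spectrum from Theorem~\ref{Spectro Gkq} by inserting the classical evaluations of the cubic Gaussian periods $\eta_0^{(3,q)},\eta_1^{(3,q)},\eta_2^{(3,q)}$; equivalently, one may apply \eqref{Pesoaut} of Theorem~\ref{pesoaut} to the known weight distribution of $\CC(3,q)$. Since $3\mid\frac{q-1}{p-1}$ we are in the setting of Theorem~\ref{pesoaut} with $k=3$, so the quantities of \eqref{N and n} become $N=3$ and $n=\frac{q-1}{3}$, and by Theorem~\ref{Spectro Gkq} the spectrum of $\G(3,q)$ is integral and of the form $\{[n]^{1+\mu n},[\eta_i^{(3,q)}]^{\mu_i n}\}$. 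First note $p\ne 3$, since $3\nmid\frac{3^m-1}{2}$; a short computation with $\frac{q-1}{p-1}=1+p+\cdots+p^{m-1}\bmod 3$ then shows that $3\mid\frac{q-1}{p-1}$ forces $3\mid m$ when $p\equiv 1\pmod 3$ and $m$ even when $p\equiv 2\pmod 3$, which explains the constraints $m=3t$ in $(a)$ and $m=2t$ in $(b)$. Finally, $|\eta_i^{(3,q)}|<\sqrt q$ by the Weil bound for the underlying Gauss sums, and $\sqrt q<n=\frac{q-1}{3}$ for every admissible $q\ge 5$ (only $q=4$, where $\G(3,4)$ is a perfect matching, is excluded), so no $\eta_i$ equals $n$; hence $\mu=0$, the graph is connected, and each of the three cosets $C_0^{(3,q)},C_1^{(3,q)},C_2^{(3,q)}$ — all of size $n$ — contributes its period with multiplicity $n$.

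For $(b)$, $p\equiv 2\pmod 3$ gives $3\mid p+1$, so for $m$ even and $q\ne 4$ the pair $(3,q)$ is a semiprimitive pair with least exponent $t=1$; hence $s=\frac m2$ and, by \eqref{signo}, $\sigma=(-1)^{s+1}=(-1)^{m/2+1}$, which is $-1$ when $m\equiv 0\pmod 4$ and $1$ when $m\equiv 2\pmod 4$. Substituting $k=3$ and these values of $\sigma$ into \eqref{autoval semip} of Theorem~\ref{semiprimitive} gives the nontrivial eigenvalues $\frac{2\sigma\sqrt q-1}{3}$ (multiplicity $n$) and $-\frac{\sigma\sqrt q+1}{3}$ (multiplicity $2n$), which is precisely the asserted spectrum in each of the two cases; a connected regular graph with exactly three eigenvalues is strongly regular. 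Alternatively, $\CC(3,q)$ is here the semiprimitive $2$-weight code of Remark~\ref{semip weight}, and \eqref{Pesoaut} turns its two weights into these two eigenvalues.

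For $(a)$, with $p\equiv 1\pmod 3$ and $m=3t$, I would invoke the classical cubic period formula over $\ff_q$ (Gauss; see the $N=3$, $p\equiv 1\pmod 3$ case of \cite{DY}, or apply \eqref{Pesoaut} to the weight distribution of $\CC(3,q)$ recorded there). Write $4\sqrt[3]{q}=4p^{t}=a^{2}+27b^{2}$ with $a\equiv 1\pmod 3$ and $\gcd(a,p)=1$; such a representation exists because $p$, and hence $p^{t}$, is represented by the form $x^{2}+27y^{2}$ when $p\equiv 1\pmod 3$, it is unique once the sign of $b$ is fixed (the sign of $a$ being pinned down by $a\equiv 1\pmod 3$), and the two sign choices for $b$ merely permute the eigenvalues and so do not affect the spectrum. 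Then the three periods are, in some order,
$$\tfrac13\big(a\,p^{t}-1\big),\qquad \tfrac13\big(-\tfrac12(a+9b)p^{t}-1\big),\qquad \tfrac13\big(-\tfrac12(a-9b)p^{t}-1\big),$$
and since $\sqrt[3]{q}=p^{t}$ this is exactly the list in the statement; as established above they occur with multiplicities $1,n,n,n$, summing to $1+3n=q$. It remains to check the three values are pairwise distinct: $b=0$ would make $p^{t}$ a square with $2p^{t/2}\equiv 1\pmod 3$, impossible for $p\equiv 1\pmod 3$; and a coincidence such as $a=-3b$ (giving equality of the first two) would force $3\mid p^{t}$, impossible as $p\ne 3$; the other cases are analogous. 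As a consistency check, the three values sum to $\tfrac13\big(ap^{t}-\tfrac12(a+9b)p^{t}-\tfrac12(a-9b)p^{t}-3\big)=-1$, as Theorem~\ref{Spectro Gkq} requires.

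The main obstacle is part $(a)$: its entire content is to locate and apply the correct cubic Gauss/period evaluation with the right normalization (the congruence $a\equiv 1\pmod 3$, the coprimality $\gcd(a,p)=1$, and the ambiguity being exactly the sign of $b$) and then verify distinctness of the three periods. Part $(b)$ is a direct specialization of Theorem~\ref{semiprimitive}, and in both parts the connectedness and multiplicity bookkeeping is routine once Theorem~\ref{Spectro Gkq} is in hand.
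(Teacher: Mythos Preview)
Your proof is correct and follows essentially the same route as the paper: both feed the classical cubic Gaussian periods (equivalently, the weight distribution of $\CC(3,q)$ from \cite{DY}) into Theorem~\ref{Spectro Gkq}/Theorem~\ref{pesoaut}. Two minor variations are worth noting: for connectedness the paper verifies directly that $n=\tfrac{q-1}{3}$ is a primitive divisor of $q-1$, whereas you argue $|\eta_i|<\sqrt q<n$ via the Weil bound to get $\mu=0$ (a clean alternative); and for part~$(b)$ you invoke Theorem~\ref{semiprimitive} rather than Theorem~20 of \cite{DY}, which the paper itself acknowledges as equivalent in Remark~\ref{rem gp34q}(iii). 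One small slip: in your distinctness check for $(a)$, the coincidence $a=-3b$ forces $3\mid a$ (contradicting $a\equiv 1\pmod 3$), not $3\mid p^t$; and it is $4p$, not $p$, that is always represented by $x^2+27y^2$ for $p\equiv 1\pmod 3$ --- but since you ultimately defer to \cite{DY} this does not affect the argument.
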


\begin{proof}
Let $q=p^m$. First note that condition $3\mid \frac{q-1}{p-1}=p^{m-1}+\cdots +p+1$ implies that 
$m=3t$ if $p\equiv 1 \pmod 3$ and $m=2t$ if $p\equiv 2 \pmod 3$. We will apply Theorem \ref{pesoaut} to the code $\CC(3,q)$. The spectrum of $\CC(3,q)$ is given in Theorems 19 and 20 in  
\cite{DY}, with different notations ($r$ for our $q$, $N$ for our $k$, etc).

By $(a)$ in \eqref{Pesoaut}, the eigenvalues of $\G(3,q)$ are given by 
\begin{equation} \label{eigen3} 
\lambda_i = \tfrac{q-1}3 - \tfrac{p}{p-1} w_i
\end{equation} 
where $w_i$ are the weights of $\CC(3,q)$. 

If $p\equiv 1 \pmod 3$, by Theorem 19 in \cite{DY}, the four weights of $\CC(3,q)$ are $w_0=0$,
\begin{equation} \label{sp1} 
w_1= \tfrac{(p-1)(q-a\sqrt[3]{q})}{3p}, \qquad w_2= \tfrac{(p-1)(q+\frac 12 (a+9b) \sqrt[3]{q})}{3p}, 
\qquad w_3= \tfrac{(p-1)(q+\frac 12 (a-9b) \sqrt[3]{q})}{3p},
\end{equation}
with frequencies $A_0=1$ and $A_1=A_2=A_3=\frac{q-1}3$; 
where $a$ and $b$ are the only integers satisfying $4\sqrt[3]{q}=a^2+27b^2$, $a\equiv 1 \pmod 3$ and $(a,p)=1$.

On the other hand, if $p\equiv 2 \pmod 3$, by Theorem 20 in \cite{DY}, the three weights of $\CC(3,q)$ are 
\begin{equation} \label{sp2} 
w_0=0, \qquad w_1= \tfrac{(p-1)(q-\sqrt{q})}{3p}, \qquad w_2= \tfrac{(p-1)(q+2\sqrt{q})}{3p},
\end{equation}
with frequencies $A_0=1$, $A_1=\frac{2(q-1)}3$ and $A_2=\tfrac{q-1}3$ if $m\equiv 0 \pmod 4$ and 
\begin{equation} \label{sp3} 
w_0=0, \qquad w_1= \tfrac{(p-1)(q-2\sqrt{q})}{3p}, \qquad w_2= \tfrac{(p-1)(q+\sqrt{q})}{3p},
\end{equation} 
with frequencies $A_0=1$, $A_1=\frac{(q-1)}3$ and $A_2=\tfrac{2(q-1)}3$ if $m\equiv 2 \pmod 4$. 
By introducing \eqref{sp1}, \eqref{sp2} and \eqref{sp3} in \eqref{eigen3}, we get the eigenvalues in ($a$) and $(b$) of the statement. 

\smallskip 
To compute the multiplicities, we use $(b)$ of Theorem \ref{pesoaut}. The hypothesis that $\G(3,q)$ be connected is equivalent to  
the fact that $n=\frac{q-1}3$ is a primitive divisor of $q-1$ (see Introduction, after \eqref{Gammas}). 
We now show that this is always the case for $q\ge 5$. 

Suppose that $p\ge 5$. Then, we have that $p^{m-1}-1 \le \frac{q-1}3$, since this inequality is equivalent to $3p^{m-1}-3 \le p^m-1$ 
which holds for $p\ge 5$. This implies that $n$ is greater that $p^a-1$ for all $a<m$ and, hence, $n$ is a primitive divisor of $q-1$.
Now, if $p=2$ we only have to check that $n$ does not divide $2^{m-1}-1$, since $n> 2^{m-2}-1$. Notice that $n\mid 2^{m-1}-1$ if and only if $2^{m}-1\mid 3(2^{m-1}-1)$, which can only happen if $m=2$. If $m>2$ we have 
$3(2^{m-1}-1) \equiv 2^{m-1}-2 \not \equiv 0 \pmod 2^m-1$
as we wanted. The prime $p=3$ is excluded by hypothesis.

Thus, by part $(b)$ of Theorem \ref{pesoaut}, the multiplicities of the eigenvalues of $\G(3,q)$ are the frequencies of the weights of $\CC(3,q)$, and we are done.
\end{proof}

\begin{exam}
Let $p=7$ and $m=3$, hence $q=7^3=343$. Since $p\equiv 1 \pmod 3$, we have to find integers  $a,b$ such that
$28=a^2+27b^2$, $a\equiv 1 \pmod 3$ and $(a,7)=1$. Clearly $a=b=1$ satisfy these conditions. By ($i$) in Theorem~\ref{gp3q} we have  
$Spec(\G(3,7^3)) = \big\{ [114]^1, [9]^{114}, [2]^{114}, [-12]^{114} \big\}$.
\hfill $\lozenge$
\end{exam}

\begin{thm} \label{gp4q}
Let $q=p^m$ with $p$ prime such that $4\mid \frac{q-1}{p-1} $ and $q\ge 5$ with $q\ne 9$.  
Thus, the graph $\G(4,q)$ is integral and connected and its spectrum is given as follows:
\begin{enumerate}[$(a)$]
\item If $p\equiv 1 \pmod 4$ then $m=4t$ for some $t\in \N$ and
$$Spec(\G(4,q)) = \big\{ [n]^1, \big[\tfrac{\sqrt{q} + 4d\sqrt[4]{q}-1}{4}\big]^n, \big[\tfrac{\sqrt{q} - 4d\sqrt[4]{q}-1}{4}\big]^n, 
\big[\tfrac{-\sqrt{q} + 2c \sqrt[4]{q}-1}{4}\big]^n, \big[\tfrac{-\sqrt{q} - 2c \sqrt[4]{q}-1}{4}\big]^n \big\}$$
where $c,d$ are integers uniquely determined by 
$$ \sqrt{q}=c^2+4d^2, \quad c\equiv 1 \pmod 4 \quad \text{and} \quad (c,p)=1.$$

\item If $p\equiv 3 \pmod 4$ then $m=2t$ for some $t\in \N$ and 
$$Spec(\G(4,q)) = \begin{cases} 
\big\{ [n]^1, \big[\tfrac{\sqrt{q}-1}{4}\big]^{3n}, \big[-\tfrac{(3\sqrt{q}+1)}{4}\big]^n \big \} & 
\qquad \text{for $m\equiv 0 \pmod 4$}, \\[2mm]
\big\{ [n]^1, \big[\tfrac{3\sqrt{q}-1}{4}\big]^{n}, \big[\tfrac{-\sqrt{q}-1}{4}\big]^{3n} \big\} & 
\qquad \text{for $m\equiv 2 \pmod 4$}.
\end{cases}$$
In particular, $\G(4,q)$ is a strongly regular graph. 
\end{enumerate}
\end{thm}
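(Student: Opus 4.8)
The plan is to mirror exactly the argument used for Theorem~\ref{gp3q}, applying Theorem~\ref{pesoaut} to the irreducible cyclic code $\CC(4,q)$. First I would observe that the hypothesis $4 \mid \frac{q-1}{p-1} = p^{m-1}+\cdots+p+1$ forces $m=4t$ when $p \equiv 1 \pmod 4$ and $m=2t$ when $p \equiv 3 \pmod 4$ (since $\frac{q-1}{p-1} \equiv m \pmod{p-1}$ in the first case and a short order computation in the second). This also guarantees, via \eqref{N and n}, that $N=k=4$, so Theorem~\ref{pesoaut} applies verbatim: the eigenvalues of $\G(4,q)$ are
$$\lambda_i = \tfrac{q-1}{4} - \tfrac{p}{p-1}\, w_i,$$
where $w_i$ runs over the weights of $\CC(4,q)$, and the multiplicities of the $\lambda_i$ are the frequencies $A_{w_i}$ provided $\G(4,q)$ is connected.

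Next I would import the known weight distribution of $\CC(4,q)$: for $p \equiv 1 \pmod 4$ this is the four-weight formula from the $N=4$ semiprimitive-exclusive case (Theorem~18, or the $N=4$ entry, in~\cite{DY}, using the representation $\sqrt q = c^2 + 4d^2$ with $c \equiv 1 \pmod 4$ and $(c,p)=1$), with $w_0=0$ of frequency $1$ and the three nonzero weights each of frequency $\frac{q-1}{4}$; for $p \equiv 3 \pmod 4$ this is the semiprimitive two-weight case (Remark~\ref{semip weight} with $k=4$, $\sigma = (-1)^{m/4+1}$, equivalently the $N=2$-type split into $m \equiv 0$ versus $m \equiv 2 \pmod 4$), with nonzero weights of frequencies $n$ and $3n$. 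Substituting these $w_i$ into the displayed relation $\lambda_i = \frac{q-1}{4} - \frac{p}{p-1} w_i$ and simplifying $\frac{q-1}{4} - \frac{p}{p-1}\cdot\frac{(p-1)(q \pm \cdots)}{4p} = \frac{-1 \mp \cdots}{4}$ yields precisely the eigenvalues listed in $(a)$ and $(b)$. The "strongly regular" conclusion in $(b)$ is then immediate from Corollary~\ref{Conseq} (three eigenvalues), or simply from the graph being connected, regular, and having exactly three distinct eigenvalues.

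The remaining point, exactly as in the proof of Theorem~\ref{gp3q}, is to verify that $\G(4,q)$ is connected, i.e.\ that $n = \frac{q-1}{4}$ is a primitive divisor of $q-1$, so that part~($b$) of Theorem~\ref{pesoaut} transfers the frequencies to multiplicities. For $p \ge 5$ the inequality $p^{m-1}-1 \le \frac{q-1}{4}$ (equivalent to $4p^{m-1} - 4 \le p^m - 1$, which holds for $p \ge 5$) shows $n > p^a - 1$ for all $a < m$, hence $n$ is a primitive divisor. For $p = 3$ the condition $4 \mid \frac{q-1}{2}$ forces $m$ even, and one checks directly that $n = \frac{3^m-1}{4}$ does not divide $3^a-1$ for $a<m$ except in the excluded case $q=9$; for $p=2$ one needs $8 \mid \frac{q-1}{1}$-type bookkeeping, but in fact $4 \mid 2^{m-1}+\cdots+1$ forces $m$ with a suitable parity, and $n = \frac{2^m-1}{4}$ fails to be an integer unless $4 \mid 2^m-1$, which is false — so $p=2$ simply does not occur under the hypothesis. (This last remark slightly simplifies the case analysis compared with $k=3$.)

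The main obstacle is purely bookkeeping: correctly matching the several sub-cases of the weight distribution of $\CC(4,q)$ in~\cite{DY} (their notation uses $r$ for our $q$ and $N$ for our $k$, and the $p \equiv 3 \pmod 4$ case splits according to $m \bmod 4$ exactly as in the $N=2$ semiprimitive situation) to the claimed eigenvalue formulas, and checking the uniqueness and normalization of the integers $c,d$ in $\sqrt q = c^2 + 4d^2$. No genuinely new idea is required beyond Theorem~\ref{pesoaut} and the connectivity estimate; the exclusion $q \ne 9$ is precisely the one small case where $n$ fails to be a primitive divisor, paralleling the role of $q \ge 5$ in Theorem~\ref{gp3q}.
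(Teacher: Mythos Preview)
Your proposal is correct and follows essentially the same approach as the paper: apply Theorem~\ref{pesoaut} to $\CC(4,q)$ using the weight distribution from \cite{DY} (it is Theorem~21 there, not~18), and verify connectivity by showing $n=\frac{q-1}{4}$ is a primitive divisor of $q-1$ via the inequality $p^{m-1}-1\le \frac{q-1}{4}$ for $p\ge 5$, a direct check for $p=3$ (where $q=9$ is the lone exception), and the observation that $p=2$ is excluded by the hypothesis. One small slip: in case~$(a)$ there are \emph{four} nonzero weights (hence four nontrivial eigenvalues) each of frequency $n=\frac{q-1}{4}$, not three.
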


\begin{proof}
The proof is similar to the one of Theorem \ref{gp3q}. We apply Theorem \ref{pesoaut} to the code $\CC(4,q)$ since the spectrum of this code is given in Theorem 21 in \cite{DY}. Thus, we leave out the details and only show that if $q\ge 5$ with $q\ne 9$ then $\frac{q-1}4$ is a primitive divisor of $q-1$ and hence $\G(4,q)$ is connected. 

Suppose that $p\ge 5$. Then, we have that $p^{m-1}-1 \le \frac{q-1}4$ since this inequality is equivalent to $4p^{m-1}-4 \le p^m-1$ 
which is true because $p\ge 5$. This implies that $n$ is greater that $p^a-1$ for all $a<m$ and hence $n$ is a primitive divisor of $q-1$.
Now, if $p=3$ we only have to check that $n$ does not divide $3^{m-1}-1$, since $n> 3^{m-2}-1$. Notice that $n\mid 3^{m-1}-1$ 
if and only if $3^{m}-1\mid 4(3^{m-1}-1)$, which can only happen if $m=2$. If $m>2$ in this case 
$4(3^{m-1}-1) \equiv 3^{m-1}-3 \not\equiv 0 \pmod 3^m-1$
as we wanted. The prime $p=2$ is excluded by hypothesis.
\end{proof}

\begin{exam}
Let $q=5^4=625$, that is $p=5$ and $m=4$. Since $p\equiv 1 \pmod 4$, we have to find integers  $c,d$ such that
$25=c^2+4d^2$, $c\equiv 1 \pmod 4$ and $(c,5)=1$. One can check that $(c,d)=(-3,2)$ satisfy these conditions and hence by ($i$) in Theorem~\ref{gp4q}, the spectrum of $\G(4,625)$ is given by 
$Spec(\G(4,625)) = \big\{ [156]^1, [16]^{156}, [1]^{156}, [-4]^{156}, [-14]^{156} \big\}$.
\hfill $\lozenge$
\end{exam}

We end this section with some remarks.
\begin{rem} \label{rem gp34q}
($i$) The integers $b$ and $d$ in Theorems \ref{gp3q} and \ref{gp4q} are determined up to sign. However, by symmetry, 
the eigenvalues in these theorems are not affected by these choices of sign.  

\noindent
($ii$) The spectrum of $\CC(k,q)$ with $k=3t$ is computed in Theorems 19 and 20 in \cite{DY}, since it requires $N=(\frac{q-1}{p-1},k)=3$.
Similarly, the spectrum of $\CC(4t,q)$ is provided by Theorems 21 in \cite{DY}. 
However, to apply Theorem \ref{pesoaut} to compute the spectrum of $\G(k,q)$ with $k=3t$ (resp.\@ $k=4t$), one needs the extra assumption 
$k\mid \frac{q-1}{p-1}$, which forces $N=k=3$ (resp.\@ $4$). 
Hence, another method must be used to compute the spectra of $\G(3t,q)$ and $\G(4t,q)$ for $t>1$. 

\noindent
($iii$) In the cases $(b)$ in Theorems \ref{gp3q} and \ref{gp4q}, the graphs $\G(3,q)$ and $\G(4,q)$ are semiprimitive 
by $(ii)$ in Example \ref{semipr}. Thus, their spectra is already given by Theorem \ref{semiprimitive} (without assuming $n$ to be a primitive divisor of $q-1$). However, the cases in $(a)$ are new.  
\end{rem}

\section{The exceptional case}
In this final section we compute the spectrum of the graphs associated with the exceptional 2-weight irreducible cyclic codes. 

Schmidt and White gave the following list of pairs $(k,q)$, with $k$ in ascending order, 
\begin{gather} \label{sporadic}
\begin{aligned}
(11,3^5), \quad (19, 5^9), \quad (35,3^{12}), \quad (37, 7^9) \quad (43, 11^7), \quad (67, 17^{33}), \\
(107,3^{53}), \quad (133, 5^{18}), \quad (163,41^{81}), \quad (323, 3^{144}), \quad (499, 5^{249}), 
\end{aligned}
\end{gather}
such that $\CC(k,q)$ is an exceptional 2-weight irreducible cyclic code, and they conjectured that these are all such codes (\cite{SW}). We refer to them as \textit{exceptional pairs}.

For all these pairs $(k,q)$, the number $n=\frac{q-1}k$ is always a primitive divisor of $q-1$.
For instance, for $(11,3^5)$, we see that $n=\frac{3^5-1}{11}=22$ is a primitive divisor of $3^5-1=242$ since $22\nmid 3^a-1$ for 
$a=1,2,3,4$. In fact, if $n$ is not a primitive divisor of $q-1$, then the code $\CC(k,q)$ would be a subfield subcode, which is not the case. Therefore, all these graphs $\G(k,q)$ are connected.

Now, we give the spectra of $\CC(k,q)$ and $\G(k,q)$ for the exceptional pairs given above. 

\begin{thm} \label{exceptional}
	Consider the eleven exceptional pairs $(k,q)$ from \eqref{sporadic}.
	The spectra of $\G(k,q)$ and $\CC(k,q)$ can be explicitly computed. 
	In Tables 3--6 we give these spectra, as well as the parameters as strongly regular graphs, for the first eight such pairs. 
\end{thm}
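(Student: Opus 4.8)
The plan is to apply Theorem~\ref{pesoaut} to each of the eleven exceptional pairs $(k,q)$, translating the known weight distribution of the exceptional $2$-weight irreducible cyclic code $\CC(k,q)$ into the spectrum of the associated GP-graph $\G(k,q)$. First I would recall that, as noted just before the statement, for every exceptional pair the number $n=\frac{q-1}{k}$ is a primitive divisor of $q-1$, so $\G(k,q)$ is connected; moreover $k\mid \frac{q-1}{p-1}$ holds in each of the eleven cases (this must be checked pair by pair, but it is a finite verification and is implicit in the fact that $\CC(k,q)$ is the code with $N=k$), so the hypotheses of Theorem~\ref{pesoaut} are met. Then part~($a$) of that theorem gives the three eigenvalues of $\G(k,q)$ as $\lambda_0=n$ and $\lambda_i = n - \tfrac{p}{p-1}\,w_i$ for the two nonzero weights $w_1,w_2$ of $\CC(k,q)$, and part~($b$) gives the multiplicities $1$, $A_{w_1}$, $A_{w_2}$ directly from the frequencies of the code.

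The next step is to import the explicit weight data. For the exceptional $2$-weight irreducible cyclic codes, Schmidt--White (\cite{SW}) — and the summary in \cite{DY} — give the two nonzero weights and their frequencies in closed form in terms of $p$, $m$ and $k$; for each of the eleven pairs these are concrete integers (or integers after simplification), since $q$ is a fixed prime power. I would tabulate, for each pair, the pair of weights $(w_1,w_2)$ and frequencies $(A_{w_1},A_{w_2})$, then apply $\lambda_i=n-\tfrac{p}{p-1}w_i$ to obtain the two nontrivial eigenvalues; this produces $Spec(\G(k,q))=\{[n]^1,[\lambda_1]^{A_{w_1}},[\lambda_2]^{A_{w_2}}\}$. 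Since $\G(k,q)$ is then a connected regular graph with exactly three distinct eigenvalues, it is strongly regular, and its parameters $srg(q,n,e,d)$ follow from the standard relations $d=n+\lambda_1\lambda_2$ and $e=d+\lambda_1+\lambda_2$ (equivalently, one reads them off the eigenvalues as in the proof of Proposition~\ref{srg}). This yields Tables~3--6 for the first eight pairs.

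The computations are routine once the weight formulas are in hand; the only genuine content is bookkeeping, so there is no deep obstacle. The main point requiring care is to confirm, for each of the eleven pairs, that $k\mid\frac{q-1}{p-1}$ so that Theorem~\ref{pesoaut} applies verbatim (the pairs are chosen precisely so that $N=\gcd(\frac{q-1}{p-1},k)=k$, but this should be stated), and, secondarily, to handle the arithmetic of the larger pairs such as $(163,41^{81})$, $(323,3^{144})$ and $(499,5^{249})$, where the eigenvalues and multiplicities are astronomically large integers; for those last three (the ninth through eleventh) pairs the explicit tables are omitted in the statement, so for the proof it suffices to note that the same recipe applies and the resulting integers, though unwieldy, are determined. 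Thus the proof reduces to: (i) verify the hypotheses of Theorem~\ref{pesoaut} for each exceptional pair; (ii) substitute the Schmidt--White weight distribution into \eqref{Pesoaut}; (iii) record the resulting spectra and the derived $srg$-parameters in the tables.
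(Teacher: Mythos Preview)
Your approach is essentially the same as the paper's: verify the hypotheses of Theorem~\ref{pesoaut}, import the Schmidt--White weight formulas, and convert via \eqref{Pesoaut}. There is one point where you diverge slightly from the paper and it is worth flagging. You assume that \cite{SW} (or \cite{DY}) supplies both the weights \emph{and} the frequencies $A_{w_1},A_{w_2}$, and then read the multiplicities off directly via part~($b$) of Theorem~\ref{pesoaut}. In fact \cite{SW} gives only the two weights (via the parameters $\theta,t,\epsilon$ in their Table~1); the paper's own proof therefore runs the argument in the opposite order: from the weights it gets $\lambda_1,\lambda_2$, observes that $\G(k,q)$ is strongly regular, and then recovers the multiplicities from the standard srg formulas $m(\lambda^\pm)=\tfrac12\{(q-1)\mp(2n+(q-1)(e-d))/\Delta\}$ together with $d=n+\lambda_1\lambda_2$, $e=d+\lambda_1+\lambda_2$. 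Only afterwards does it note that these multiplicities equal the frequencies $A_{w_i}$ --- indeed the remark following the theorem presents the computation of the frequencies as a contribution of the paper.

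This is not a genuine gap in your argument, since for a $2$-weight code of known length and dimension the two frequencies are determined by the two weights (e.g.\ via $A_{w_1}+A_{w_2}=q-1$ and $w_1A_{w_1}+w_2A_{w_2}=n(p-1)p^{m-1}$), so your route and the paper's are equivalent. But you should not attribute the frequencies to \cite{SW}; either derive them from the weights by one of these linear relations, or follow the paper and extract them from the srg multiplicity formula.
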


\begin{proof}
	From Corollary 3.2 and Table 1 in \cite{SW}, the nonzero weights of $\CC(k,q)$ are given by 
	\begin{equation} \label{weights sporadic} 
	w_1 = \tfrac 1k (p-1)p^{\theta -1}(p^{m-\theta}-\epsilon t) \qquad \text{and} \qquad w_2 = w_1 + \epsilon (p-1)p^{\theta-1},
	\end{equation}
	where, in our notations ($u=k$, $f=m$, $k=t$),
	\begin{equation} \label{tablita}
	\renewcommand{\arraystretch}{1}
	\begin{tabular}{|c|c|c|c|c|c|}
	\hline 
	$k$ & $p$ & $m$ & $\theta$ & $t$ & $\epsilon$ \\ \hline
	11 & 3  & 5  & 2  & 5  & 1 \\
	19 & 5  & 9  & 4  & 9  & 1 \\
	35 & 3  & 12 & 5  & 17 & 1 \\
	37 & 7  & 9  & 4  & 9  & 1 \\
	43 & 11 & 7  & 3  & 21 & 1 \\
	67 & 17 & 33 & 16 & 33 & 1 \\
	\hline
	\end{tabular}
	\qquad \quad 
	\renewcommand{\arraystretch}{1.15}
	\begin{tabular}{|r|r|r|r|r|r|}
	\hline 
	$k$ & $p$ & $m$ & $\theta$ & $t$ & $\epsilon$ \\ \hline
	107& 3 & 53& 25 & 53&1 \\
	133& 5 & 18&  8 & 33&$-1$\\
	163& 41& 81& 40 & 81&$1$ \\
	323& 3 &144& 70 &161&1 \\
	499& 5 &249& 123&249&1 \\
	\hline
	\end{tabular}
	\end{equation}
	From this, by Theorem \ref{pesoaut}, putting $n=\frac{q-1}{k}$, the eigenvalues of $\G(k,q)$ are 
	\begin{equation} \label{eigen sporadic}
	\lambda_1 = n-\tfrac{p}{p-1} w_1 \qquad \text{and} \qquad \lambda_2 = n-\tfrac{p}{p-1} w_2.
	\end{equation}

	To compute the multiplicities, notice that since $\G(k,q)$ is connected and $\CC(k,q)$ is a 2-weight code, 
	then $\G(k,q)$ is an strongly regular graph with parameters $srg(q,n,e,d)$.
	It is known that the eigenvalues and their multiplicities of such a graph are respectively given by
	\begin{equation} \label{multip sporadic} 
	\lambda^{\pm} =\tfrac{(e-d) \pm \Delta}2 \qquad \text{and} \qquad 
	m(\lambda^\pm)= \tfrac 12 \Big\{ (q-1) \mp \tfrac{2n + (q-1)(n-d)}{\Delta} \Big\}
	\end{equation}
	where 
	$$\Delta= \sqrt{(e-d)^2 + 4(n-d)}.$$
	Thus $\lambda_1 = \lambda^+$ and $\lambda_2 = \lambda^-$. Since 
	$\lambda_1+\lambda_2 = e-d$ and $\lambda_1 - \lambda_2=\Delta$, we obtain 
	\begin{equation} \label{ed sporadic} 
	d = n - \tfrac{(\lambda_1 - \lambda_2)^2-(\lambda_1+\lambda_2)^2}4 \qquad \text{and} \qquad 
	e=d+\lambda_1+\lambda_2,
	\end{equation}
	and hence the multiplicities can be calculated from \eqref{multip sporadic}. Since $\G(k,q)$ is connected, the multiplicities $m_i$ are also the frequencies $A_{w_i}$ of the weights $w_i$, $i=1,2$, by Theorem \ref{pesoaut}.
	
	By computing \eqref{weights sporadic}, using 
	\eqref{tablita} and performing all the calculations in \eqref{eigen sporadic}, \eqref{multip sporadic} and \eqref{ed sporadic}, one can get the spectrum of $\G(k,q)$ and $\CC(k,q)$ for all the 
	exceptional pairs $(k,q)$.
	The spectra of the first eight pairs are given in Tables 4--6.
	The remaining pairs $(163, 41^{81})$, $(323,3^{144})$ and $(499,5^{249})$ are quite unmanageable, 
	and we do not give them for readability. 
\end{proof}

\renewcommand{\arraystretch}{1.35}
\begin{table}[h!] \label{table4}
	\caption{Spectra of $\G(k,q)$ for the first 5 exceptional pairs.} \
	
	\begin{tabular}{|c|c|c|}
		\hline
		$(k,q)$ & parameters $srg(q,n,e,d)$ & $Spec(\G(k,q)) \smallsetminus \{[n]^1\}$ \\
		\hline
		$(11,3^5)$ 		& $srg(243, 22, 1, 2)$ & $\{[4]^{132}, [-5]^{110}\}$  \\ \hline
		$(19,5^9)$ 		& $srg(1{.}953{.}125, 102{.}796, 5{.}379, 5{.}412)$ & $\{[296]^{1{.}027{.}960}, [-329]^{925{.}164}\}$ \\ \hline
		$(35,3^{12})$ & $srg(531{.}441, 15{.}184, 427, 434)$ & $\{[118]^{273{.}312}, [-125]^{258{.}128}\}$ \\ \hline
		$(37,7^9)$ 		& $srg(40{.}353{.}607, 1{.}090{.}638, 282{.}771, 29{.}510)$ & $\{[584]^{30{.}537{.}864}, [-1817]^{9{.}815{.}742}\}$ \\ \hline
		$(43,11^7)$ 	& $srg(19{.}487{.}171, 453{.}190, 10{.}509, 10{.}540)$ & $\{[650]^{9{.}970{.}180}, [-681]^{9{.}516{.}990}\}$ \\ \hline
	\end{tabular}
\end{table}

The frequencies $A_{w_i}$ of the weights $w_i$ of $\CC(k,q)$ in the table below are the multiplicities $m_i$ of the corresponding eigenvalues 
$\lambda_i$ of $\G(k,q)$.

\renewcommand{\arraystretch}{1.25}
\begin{table}[h!] \label{table4b}
	\caption{Spectra of $\CC(k,q)$ for the first 5 exceptional pairs} \
	
	\begin{tabular}{|c|c|c|c|c|c|}
		\hline
		weights & $\CC(11,3^5)$ & $\CC(19,5^9)$ & $\CC(35,3^{12})$ & $\CC(37,7^9)$ & $\CC(43,11^7)$ \\ \hline
		$w_1$ 	& 22 						& 82{.}000 			& 10{.}044      	 & 934{.}332     & 411{.}400 \\
		$w_2$ 	& 18 						& 82{.}500 			& 10{.}026 				 & 936{.}390     & 412{.}610 \\ \hline
	\end{tabular}
\end{table}

The pairs $(67, 17^{33})$, $(107, 3^{53})$ and $(133, 5^{18})$ have intermediate complexity and are given separately in Tables 5 
and 6.

\renewcommand{\arraystretch}{1.255}
\begin{table}[h!] \label{table5}
	\caption{Spectra of $\G(k,q)$ and $\CC(k,q)$ for the 6th exceptional pair.} \
	\begin{tabular}{l}
		Pair $(67, 17^{33})$ \\
		\hline
		$q= 40{.}254{.}497{.}110{.}927{.}943{.}179{.}349{.}807{.}054{.}456{.}171{.}205{.}137$ \\
		$n= 600{.}813{.}389{.}715{.}342{.}435{.}512{.}683{.}687{.}379{.}942{.}853{.}808$ \\ 
		$e = 8{.}967{.}364{.}025{.}602{.}125{.}902{.}458{.}937{.}044{.}032{.}559{.}119$ \\ 
		$d = 8{.}967{.}364{.}025{.}602{.}125{.}903{.}185{.}223{.}489{.}938{.}034{.}768$ \\ \hline
		$\lambda_1 =  23{.}967{.}452{.}714{.}880{.}696{.}416$ \\
		$m_1 = 20{.}427{.}655{.}250{.}321{.}642{.}807{.}431{.}245{.}370{.}918{.}057{.}029{.}472$  \\ \hline
		$\lambda_2 = -24{.}693{.}739{.}160{.}786{.}172{.}065$ \\
		$m_2= 19{.}826{.}841{.}860{.}606{.}300{.}371{.}918{.}561{.}683{.}538{.}114{.}175{.}664$ \\ \hline
		$w_1 = 565{.}471{.}425{.}614{.}439{.}939{.}283{.}497{.}632{.}625{.}940{.}854{.}016$ \\
		$w_2 = 565{.}471{.}425{.}614{.}439{.}939{.}329{.}296{.}401{.}450{.}097{.}906{.}704$ \\ \hline
	\end{tabular}

\end{table}
\begin{table}[h!] \label{table6}
	\caption{Spectra of $\G(k,q)$ and $\CC(k,q)$ for the 7th and 8th exceptional pairs.}
	\begin{tabular}{l}
		Pair $(107, 3^{53})$ \\
		\hline
		$q = 19{.}383{.}245{.}667{.}680{.}019{.}896{.}796{.}723$ \\ 
		$n = 181{.}151{.}828{.}669{.}906{.}728{.}007{.}446$ \\ 
		$e = 360{.}610{.}649{.}595{.}226{.}895{.}872{.}817$ \\ 
		$d = 360{.}610{.}649{.}595{.}234{.}814{.}457{.}952$ \\ \hline
		
		$\lambda_1 =  419{.}685{.}012{.}154$ \\
		$m_1 = 9{.}782{.}198{.}748{.}174{.}963{.}312{.}402{.}084$  \\ \hline
		$\lambda_2 = -427{.}603{.}597{.}289$ \\
		$m_2= 9{.}601{.}046{.}919{.}505{.}056{.}584{.}394{.}638$ \\ \hline
		$w_1 = 120{.}767{.}885{.}779{.}658{.}028{.}663{.}528$ \\
		$w_2 = 120{.}767{.}885{.}780{.}222{.}887{.}736{.}490$ \\ \hline
	\end{tabular} \qquad \qquad 
	\begin{tabular}{l}
		Pair $(133, 5^{18})$ \\
		\hline
		$q = 3{.}814{.}697{.}265{.}625$ \\ 
		$n = 28{.}681{.}934{.}328$ \\ 
		$e = 215{.}848{.}943$ \\ 
		$d = 215{.}652{.}162$ \\ \hline
		$\lambda_1 = -96{.}922$ \\
		$m_1 = 2{.}868{.}193{.}432{.}800$  \\ \hline
		$\lambda_2 = 293{.}703$ \\
		$m_2= 946{.}503{.}832{.}824$ \\ \hline
		$w_1 = 22{.}945{.}625{.}000$ \\
		$w_2 = 22{.}945{.}312{.}500$ \\ \hline
	\end{tabular}
\end{table}

\begin{rem}
	In \cite{SW}, Schmidt and White gave an expression for the weights of the (conjecturally) all 2-weight irreducible cyclic codes. 
	From Theorems \ref{semiprimitive}, \ref{pesoaut} and \ref{exceptional}, we provide the frequencies of these weights in the semiprimitive and exceptional cases, thus completing the computation of the weight distributions of the connected 2-weight irreducible cyclic codes which are non subfield subcodes; i.e.\@ those associated with connected GP-graphs. 
\end{rem}


\begin{thebibliography}{XXX}
	
\bibitem{Ak} 
\textsc{R.\@ Akhtar, T.\@ Jackson-Henderson, R.\@ Karpman, M.\@ Boggess, I.\@ Jim\'enez, A.\@ Kinzel, D.\@ Pritikin}.
\textit{On the unitary Cayley graph of a finite ring}. 
Electron.\@ J.\@ Comb.\@ 16:1 (2009), RP 117, 13 pp. 

\bibitem{An} 
\textsc{W. Ananchuen}. 
\textit{On the adjacency properties of generalized Paley graphs}. 
Austral.\@ J.\@ Combin.\@ 24 (2001), 129--147.

\bibitem{BMc}{\sc L.D.\@ Baumert, R.J.\@ McEliece}. 
\textit{Weights of irreducible cyclic codes.} 
Information and Control 20 (1972), 158--175. 

\bibitem{BMy}{\sc L.D.\@ Baumert, J.\@ Mykkeltveit}.
\textit{Weight distributions of some irreducible cyclic codes.} 
DSN Progr.\@ Rep.\@ 16 (1973), 128--131.

\bibitem{Br}{\sc A.E.\@ Brouwer}.
Strongly regular graphs' page \verb|www.win.tue.nl/~aeb/graphs/srg/srgtab.html|

\bibitem{CK}{\sc R.\@ Calderbank, W.\@ Kantor}.
\textit{The geometry of two-weight codes.} 
Bull. London Math. Soc. 18 (1986) 97--122.

\bibitem{CvL} \textsc{P.J.\@ Cameron, J.H.\@ van Lint}. \textit{Designs, graphs, codes and their links}. 
Cambridge University Press, LMSST 22, 1991.


\bibitem{D}{\sc P.\@ Delsarte}.
\textit{Weights of linear codes and strongly regular normed spaces.} 
Discrete Math.\@ 3 (1972) 47--64.


\bibitem{DG}{\sc P.\@ Delsarte, J.M.\@ Goethals}.
\textit{Irreducible binary cyclic codes of even dimension.} 
Proc. Second Chapel Hill Conf. on Combinatorial Mathematics and its
Applications, Univ.\@ North Carolina, Chapel Hill, NC, (1970) 100--113.

\bibitem{Di1}{\sc C.\@ Ding}. 
\textit{The weight distribution of some irreducible cyclic codes.} 
IEEE Trans.\@ Inform.\@ Theory 55:3 (2009), 955--960.

\bibitem{Di2}{\sc C.\@ Ding}. 
\textit{A class of three-weight and four-weight codes}. 
C.\@ Xing, et al. (Eds.), Proc.\@ of the Second International Workshop on Coding Theory and
Cryptography. Lecture Notes in Computer Science, vol.\@ 5557, Springer Verlag, (2009) 34--42.

\bibitem{DY}{\sc C.\@ Ding, J.\@ Yang}. 
\textit{Hamming weights in irreducible cyclic codes}.
Discrete Math.\@  313:4 (2013), 434--446.

\bibitem{GK}{\sc  D.\@ Ghinelli, J.D.\@ Key,}. 
\textit{Codes from incidence matrices and line graphs of Paley graphs}.
Adv.\@ Math.\@ Comm.\@ 5 (2011), 93--108.

\bibitem{HPvR}{\sc  W.\@ Haemers, R.\@ Peeters, J.\@ van Rijckevorsel}. 
\textit{Binary codes of strongly regular graphs}.
Design Code. Cryptogr.\@ 17 (1999), 187--209.

\bibitem{KL}{\sc J.D.\@ Key, J.\@ Limbupasiriporn}. 
\textit{Partial permutation decoding for codes from Paley graphs}.
Cong.\@ Numer.\@ 170 (2004), 143--155.

\bibitem{LHFG}{\sc S.\@ Li, S.\@ Hu, T.\@ Feng, G.\@ Ge}. 
\textit{The weight distribution of a class of cyclic codes related to Hermitian forms graphs}. 
IEEE Trans.\@ Inform.\@ Theory 59:5 (2013), 3064--3067.

\bibitem{LP}{\sc T.K.\@ Lim, C.\@ Praeger}. 
\textit{On Generalised Paley Graphs and their automorphism groups}.
Michigan Math.\@ J.\@ 58 (2009), 294--308.

\bibitem{Mc}{\sc R.J.\@ McEliece}. 
\textit{Irreducible cyclic codes and Gauss sums}.
Combinatorics in: Proc. NATO Advanced Study Inst., Breukelen, 1974. Math. Centre Tracts 55, Math. Centrum, Amsterdam, 1974, 179--196.

\bibitem{PV1}\textsc{R.A.\@ Podest\'a, D.E.\@ Videla}.
\textit{The spectra of generalized Paley graphs and applications}.
arXiv:1812.03332, (2018).	

\bibitem{PV2}\textsc{R.A.\@ Podest\'a, D.E.\@ Videla}.
\textit{Weight distribution of cyclic codes defined by quadratic forms and related curves}.
arXiv:1903.01838, (2019).

\bibitem{SL}{\sc P.\@ Seneviratne, J.\@ Limbupasiriporn}. 
\textit{Permutation decoding from generalized Paley graphs}. 
Appl.\@ Algebra in Eng.\@ Comm.\@ and Computing 24 (2013) 225--236.

\bibitem{SB}{\sc A.\@ Sharma, G.K.\@ Bakshi}. 
\textit{The weight distribution of some irreducible cyclic codes.} 
Finite Fields Appl.\@ 18:1 (2012), 144--159.

\bibitem{SW}
\textsc{B.\@ Schmidt, C.\@ White}.
\textit{All two weight irreducible cyclic codes?}
Finite Fields Appl.\@ 8 (2002), 1--17.

\bibitem{vLSch}
\textsc{J.\@ H.\@ van Lint, A.\@ Schrijver}.
\textit{Construction of strongly regular graphs, two-weight codes and partial geometries by finite fields}. 
Combinatorica 1:1 (1981), 63--73. 

\bibitem{VW}{\sc G.\@ Vega, J.\@ Wolfmann}. 
\textit{New classes of 2-weight cyclic codes}. 
Des.\@ Codes Cryptogr.\@ 42 (2007), 327--334.

\bibitem{ZZDX} {\sc  Z.\@ Zhou, A.\@ Zhang, C.\@ Ding, M.\@ Xiong}. 
\textit{The weight enumerator of three families of cyclic codes}. 
IEEE Trans.\@ Inform.\@ Theory 59:9 (2013),  6002--6009.
\end{thebibliography}
\end{document}